\documentclass[12 pt]{article}
\usepackage{geometry}
\geometry{margin=1in}

\usepackage{amssymb}
\usepackage{amsmath, amsthm, amsbsy, centernot, mathrsfs}
\usepackage{tikz-cd}
\usepackage{amssymb}
\usepackage[all]{xy}
\usepackage{enumerate}
\usepackage{amsfonts}
\usepackage{mathtools}

\usepackage{sectsty}
\sectionfont{\normalsize\bfseries}
\subsectionfont{\normalsize\bfseries}
\interfootnotelinepenalty=10000

\newtheorem{theorem}{Theorem}
\numberwithin{theorem}{section}
\newtheorem{proposition}[theorem]{Proposition}
\newtheorem{lemma}[theorem]{Lemma}
\newtheorem{corollary}[theorem]{Corollary}
\theoremstyle{definition}

\newtheorem{example}[theorem]{Example}

\theoremstyle{remark}
\newtheorem{remark}[theorem]{Remark}

\begin{document}
\def\dim{{\rm{dim}}}
\def\rk{{\rm{rk}}}
\def\Hom{{\rm{Hom}}}
\def\End{{\rm{End}}}
\def\Aut{{\rm{Aut}}}
\def\Ext{{\rm{Ext}}}
\def\Res{{\rm{Res}}}
\def\Ann{{\rm{Ann}}}
\def\Z{\mathbb{Z}}
\def\z{\mathcal{Z}}
\def\N{\mathbb{N}}
\def\R{\mathbb{R}}
\def\O{\mathcal{O}}
\def\C{\mathscr{C}}
\def\ker{{\rm{Ker}}}
\def\im{{\rm{Im}}}
\def\char{{\rm{char}}}
\def\GL{{\rm{GL}}}
\def\F{\mathbb{F}}
\def\t{\tau}
\def\l{\lambda}
\def\L{\mathscr{L}}
\def\ind{{\rm{IND}}}
\def\sgn{{\rm{SGN}}}
\def\Ind{{\rm{Ind}}}
\def\Res{{\rm{Res}}}
\def\mod{{\rm{mod}}}
\def\S{\mathfrak{S}}
\def\soc{{\rm{soc}}}
\def\head{{\rm{head}}}
\def\rad{{\rm{rad}}}
\def\b{\mathfrak{b}}
\def\e{\mathfrak{e}}
\def\p{\mathfrak{p}}
\def\u{\mathfrak{u}}
\def\a{\alpha}
\def\St{{\rm{St}}}
\def\H{\mathscr{H}}
\def\F{\mathfrak{F}}
\def\G{\Gamma}
\def\D{\Delta}
\def\d{\delta}
\def\r{\rho}
\def\SL{{\rm{SL}}}
\def\Rad{{\rm{Rad}}}
\def\Cor{{\rm{Cor}}}

\title{$\Ext$ Groups between Irreducible $\GL_n(q)$-modules in Cross Characteristic}

\author{Veronica Shalotenko \\ \footnotesize{vvs9cc@virginia.edu} \\ \footnotesize{Department of Mathematics, University of Virginia, Charlottesville, VA 22903}}
\date{}

\maketitle

\begin{abstract}
{\footnotesize{Let $G=\GL_n(q)$  be the general linear group over the finite field $\mathbb{F}_q$ of $q$ elements, and let $k$ be an algebraically closed field of characteristic $r >0$ such that $r \centernot\mid q(q-1)$. In 1999, Cline, Parshall, and Scott showed that under these assumptions, cohomology calculations for $G$ may be translated to $\Ext^i$ calculations over a $q$-Schur algebra. The aim of this paper is to extend the results of Cline, Parshall, and Scott and show that $\Ext^i$ calculations for $\GL_n(q)$ may also be translated to $\Ext^i$ calculations over an appropriate $q$-Schur algebra (both for $i=1$ and $i>1$). To that end, we establish formulas relating certain $\Ext$ groups for $\GL_n(q)$ to $\Ext$ groups for the $q$-Schur algebra $S_q(n,n)_k$. As a consequence, we show that there are no non-split self-extensions of irreducible $kG$-modules belonging to the unipotent principal Harish-Chandra series. As an application in higher degree, we describe a method which yields vanishing results for higher Ext groups between irreducible $kG$-modules and demonstrate this method in a series of examples.}}
\end{abstract}

\section{Introduction}

Let $q$ be a power of a prime $p$, let $G=\GL_n(q)$ be the general linear group over the finite field $\mathbb{F}_q$ of $q$ elements, and let $k$ be an algebraically closed field of characteristic $r >0$ such that $r \centernot\mid q(q-1)$. We will work with right $kG$-modules, and all $kG$-modules will be assumed to be finite-dimensional over $k$. Let $B$ be the subgroup of $G$ consisting of invertible upper triangular $n \times n$ matrices; let $U$ be the subgroup of $B$ consisting of invertible upper triangular matrices with 1's along the main diagonal, and let $T$ be the subgroup of $B$ consisting of invertible diagonal matrices. In this case, $|U|=q^{\frac{n^2-n}{2}}$, $|T|=(q-1)^n$, and $B=U \rtimes T$. The assumption $r \centernot\mid q(q-1)$ ensures that $r \centernot\mid |B|$, so that $k|_B^G$ is a projective $kG$-module. \\

In this paper, we study the relationship between $\Ext$ groups for $G$ and $\Ext$ groups for the $q$-Schur algebra $S_q(n,n)_k$ in cross-characteristic ($S_q(n,n)_k$ is defined in Section 2.2). Since the category mod-$S_q(n,n)_k$ of right $S_q(n,n)_k$-modules is a highest weight category, many Ext groups for $S_q(n,n)_k$ are known (see, for instance, \cite[App. A2]{donkin}). Thus, it is advantageous to translate Ext calculations over $G$ to Ext calculations over $S_q(n,n)_k$: certain Ext groups between $kG$-modules may be easier to compute in mod-$S_q(n,n)_k$.\footnote{Furthermore, $\Ext^i$ calculations over a $q$-Schur algebra can be translated to $\Ext^i$ calculations in the category of integrable modules for the quantum enveloping algebra $\tilde{U}_q(\mathfrak{gl}_n)$. So, the connection between $\Ext^i$ calculations for $\GL_n(q)$ and $\Ext^i$ calculations for a $q$-Schur algebra opens the possibility of using the theory of quantum groups to learn more about the structure of Ext groups for $\GL_n(q)$.} \\

To carry out the Ext calculations presented in this paper, we will require an indexing of the irreducible $kG$-modules. We will, for the most part, work with the parameterization of Cline, Parshall, and Scott \cite{kyoto}. In the CPS indexing, the irreducible constituents of the permutation module $k|_B^G$ may be labeled by partitions of $\l$ of $n$; given a partition $\l \vdash n$, the corresponding irreducible $kG$-module $D(1,\l)$ occurs at least once as a composition factor of $k|_B^G$ \cite[Thm. 2.1]{scott}. (See Section 3.1 for a detailed description of the CPS indexing on the irreducible $kG$-modules.) In this paper, the indexing of the irreducible $kG$-modules in the unipotent principal Harish-Chandra series ${\rm{Irr}}_k(G|B)$ will play a particularly important role. The unipotent principal series ${\rm{Irr}}_k(G|B)$ consists of the irreducible $kG$-modules which can be found in the head (and, equivalently, the socle) of the permutation module $k|_B^G$. (We refer the reader to \cite[Sec. 4.2]{gj} for a discussion of Harish-Chandra series.) Let $|q \pmod{r}|$ denote the multiplicative order of $q$ modulo $r$, and define $l \in \Z^+$ by
$$l=
\begin{cases}
r &\text{ if } |q \pmod{r}|=1 \\
|q \pmod{r}| &\text{ if } |q \pmod{r}|>1.
\end{cases}
$$
In the CPS indexing, the irreducible $kG$-modules belonging to ${\rm{Irr}}_k(G|B)$ correspond to $l$-restricted\footnote{A partition if $\l \vdash n$ is called $l$-restricted if its dual $\l'$ is $l$-regular, meaning that every part of $\l'$ occurs less than $l$ times.} partitions $\l$ of $n$, and we have
$${\rm{Irr}}_k(G|B)= \{ D(1,\l) | \l \vdash n, \l \text{ is } l\text{-restricted} \}.$$
In Section 4 of this paper, we show that for an $l$-restricted partition $\l \vdash n$ and an arbitrary partition $\mu \vdash n$, $$\Ext^1_{kG}(D(1,\l),D(1,\mu)) \cong \Ext^1_{S_q(n,n)_k}(L^k(\l),L^k(\mu)),$$ where $S_q(n,n)_k$ is the $q$-Schur algebra of bidegree $(n,n)$ over $k$ and for any $\l \vdash n$, $L^k(\l)$ denotes the corresponding irreducible $S_q(n,n)_k$-module. In Section 5, we extend this result to show that certain higher $\Ext$ groups for $\GL_n(q)$ in cross-characteristic are isomorphic to $\Ext$ groups over a $q$-Schur algebra. \\

This work was inspired by the cohomology computations of \cite{kyoto}, where the authors connect $H^i$ calculations for $G$ in cross characteristic to $\Ext^i$ calculations over a $q$-Schur algebra. In degree one, Cline, Parshall, and Scott show that
$$H^1(G,D(1,\mu)) \cong \Ext^1_{S_q(n,n)_k}(L^k(1^n),L^k(\mu))$$ 
for any partition $\mu \vdash n$ \cite[Thm. 10.1]{kyoto}. This result is extended to higher degree cohomology groups in \cite[Thm. 12.4]{kyoto} (under certain additional assumptions on the characteristic of $k$). In this paper, we generalize \cite[Thm. 10.1]{kyoto} and \cite[Thm. 12.4]{kyoto} to obtain new $\Ext$ computations for $G$ in cross characteristic. \\

Our generalization of \cite[Thm. 10.1]{kyoto} (given in Theorem \ref{general1}) follows readily once we observe that the trivial irreducible $kG$-module $k$ shares certain properties with all irreducible $kG$-modules belonging to ${\rm{Irr}}_k(G|B)$. As a consequence, we prove that there are no non-split self-extensions of irreducible $kG$-modules belonging to the unipotent principal series. In other words, given any irreducible $kG$-module $Y$ belonging to the unipotent principal series ${\rm{Irr}}_k(G|B)$, $\Ext^1_{kG}(Y,Y)=0$.\footnote{Guralnick and Tiep \cite{gt} found bounds on the dimension of $H^1$ for finite groups of Lie type in cross characteristic, which were extended to $\Ext^1$ by the author \cite{shalotenko1}. Using the methods of Guralnick and Tiep, it is only possible to show that $\dim~\Ext^1_{kG}(Y,Y) \leq |W|=n!$ (where $W=S_n$ is the Weyl group of $\GL_n(q)$). Thus, the result of this paper significantly improves our understanding of $\Ext^1$ groups for $\GL_n(q)$ in cross characteristic.} We give a generalization of \cite[Thm. 12.4]{kyoto} in Theorem \ref{bigone}. This generalization requires much more background machinery; the key is the existence of a resolution with certain desirable properties, which we construct in Lemma {\ref{resolution}. In Section 6, we give examples to illustrate some applications of the theory developed in Section 5; in particular, this theory allows us to obtain higher Ext vanishing results for $\GL_n(q)$. The main theorems of Sections 4 and 5 of this paper were originally proved in the author's thesis \cite{thesis}.

\section{Preliminaries}

\subsection{The Hecke algebra}

Let $\z=\Z[t,t^{-1}]$ be the ring of Laurent polynomials over $\Z$ in an indeterminate $t$. Let $(W,S)$ be a finite Coxeter system with length function $l$, and let $\widetilde{H}=\widetilde{H}(W,\z)$ denote the associated generic Hecke algebra over $\z$. The Hecke algebra $\widetilde{H}$ is free over $\z$; it has basis $\{ \tau_w \}_{w \in W}$, satisfying the relations

$$
\tau_s\tau_w =
\begin{cases}
\tau_{sw} &\text{ if } l(sw)>l(w) \\
t\tau_{sw}+(t-1)\tau_w &\text{ otherwise}
\end{cases}
$$
for $s \in S$, $w \in W$. \\

There is a $\z$-involution $\Phi: \widetilde{H} \to \widetilde{H}$ with $\Phi(\tau_w)=(-t)^{l(w)} \tau_{w^{-1}}^{-1}$ for all $w \in W$ \cite[(2.0.3)]{ps}. Given a right $\widetilde{H}$-module $\widetilde{M}$, let $\widetilde{M}^\Phi$ denote the right $\widetilde{H}$-module obtained by twisting the action of $\widetilde{H}$ on $\widetilde{M}$ by $\Phi$. The assignment $\widetilde{M} \mapsto \widetilde{M}^\Phi$ defines an exact functor $\Phi: \text{mod-}\widetilde{H} \to \text{mod-}\widetilde{H}$ (where $\text{mod-}\widetilde{H}$ denotes the category of right $\widetilde{H}$-modules). \\

There is also a $\z$-anti-involution $\iota: \widetilde{H} \to \widetilde{H}$ given by $\iota(\tau_w)=\tau_{w^{-1}}$ for any $w \in W$ \cite[(2.0.5)]{ps}. Given a right $\widetilde{H}$-module $\widetilde{M}$, we let $\widetilde{M}^\iota$ denote the left $\widetilde{H}$-module obtained by twisting the action of $\widetilde{H}$ on $\widetilde{M}$ by $\iota$. The anti-automorphism $\iota$ is used primarily as a means to convert the left action of $\widetilde{H}$ on the dual of a right $\widetilde{H}$-module to a right action. Let $(-)^*=\Hom_\z(-,\z)$ denote the linear dual on the category $\text{mod-}\widetilde{H}$ of right $\widetilde{H}$-modules. Then, given a right $\widetilde{H}$-module $\widetilde{M} \in \text{mod-}\widetilde{H}$, the dual $\widetilde{M}^*$ of $\widetilde{M}$ is a left $\widetilde{H}$-module. Thus, twisting the action of $\widetilde{H}$ on $\widetilde{M}^*$ by $\iota$ yields a right $\widetilde{H}$-module. For any $\widetilde{M} \in \text{mod-}\widetilde{H}$, we will write $\widetilde{M}^{D_{\widetilde{H}}} :=\widetilde{M}^{*\iota}$. The functor $D_{\widetilde{H}}: \text{mod-}\widetilde{H} \to \text{mod-}\widetilde{H}$, $\widetilde{M} \mapsto \widetilde{M}^{D_{\widetilde{H}}}$ is a contravariant duality functor. Given any $\z$-free right $\widetilde{H}$-module $\widetilde{M}$, we have $\widetilde{M}^{D_{\widetilde{H}}^2} \cong \widetilde{M}$. \\

For a subset $J \subseteq S$, let $W_J=\langle s \rangle_{s \in J}$ be the parabolic subgroup of $W$ corresponding to $J$. We can similarly define a parabolic subalgebra $\widetilde{H}_J$ of $\widetilde{H}$ by setting $\widetilde{H}_J= \langle \tau_s \rangle_{s \in J}$. Also, to any $J \subseteq S$, we can associate elements $x_J=\underset{w \in W_J}{\sum} \t_w  \in \widetilde{H}$ and $y_J=\underset{w \in W_J}{\sum} (-t)^{\l(w)}\t_w \in \widetilde{H}$. (The right $\widetilde{H}$-modules $x_J \widetilde{H} $ and $y_J\widetilde{H}$ are induced rom the parabolic subalgebra $\widetilde{H}_J$ of $\widetilde{H}$ \cite[Lem. 1.1]{dps}.) We will use the elements $x_J$ and $y_J$ in Section 3 in order to describe a Morita equivalence of Cline, Parshall, and Scott \cite{kyoto}, which yields an indexing of the irreducible $k \GL_n(q)$-modules. \\

\subsection{The $q$-Schur algebra}\label{qschur}

Let $\z=\Z[t,t^{-1}]$ and let $(W,S)=(\S_m,S)$ be the Coxeter system in which $\S_m$ is the symmetric group on $m$ letters and $S=\{(1,2), (2,3), \ldots, (m-1,m)\}$ is the set of fundamental reflections. Let $\widetilde{H}=\widetilde{H}(\S_m,\z)$ be the corresponding generic Hecke algebra. \\

Let $V$ be a free $\z$-module of rank $n>0$ and let $\{v_1, \ldots, v_n\}$ be an ordered basis of $V$. Given a sequence $J=(j_1, \ldots, j_m)$ of integers with $1 \leq j_i \leq n$, let $v_J=v_{j_1} \otimes \cdots \otimes v_{j_m}$. The elements $v_J$ give a basis of $V^{\otimes m}$. For $\sigma \in \S_m$, let $J\sigma=(j_{\sigma^{-1}(1)}, \ldots, j_{\sigma^{-1}(m)})$. Then, for any $s \in S$, we can define an action of the generator $\t_s$ of $\widetilde{H}$ on $V^{\otimes m}$ by 
$$
v_J\t_s =
\begin{cases}
tv_{Js} &\text{ if } j_i \leq j_{i+1} \\
v_{Js}+(t-1)v_J &\text{otherwise}.
\end{cases} 
$$
This action extends to give a right action of $\widetilde{H}$ on $V^{\otimes m}$, and the endomorphism algebra $S_t(n,m) := \End_{\widetilde{H}}(V^{\otimes m})$ is the $t$-Schur algebra of bidegree $(n,m)$ over $\z$ (this is the definition given in \cite[(8.2)]{kyoto}). \\

There is another description of the the tensor product $V^{\otimes m}$, involving partitions of $m$. Let $\Lambda(n,m)$ denote the set of compositions of $m$ with at most $n$ non-zero parts, and let $\Lambda^+(n,m)$ denote the set of partitions of $m$ with at most $n$ non-zero parts. By rearranging parts, every composition $\l$ corresponds to a unique partition $\l^+ \in \Lambda^+$. As a right $\widetilde{H}$-module, $V^{\otimes m} \cong \underset{\l \in \Lambda(n,m)}{\oplus} x_\l \widetilde{H}$. To every composition $\l \in  \Lambda(n,m)$, we can associate a subset $J(\l)$ of $S$, where $J(\l)$ consists of those $s \in S$ which stabilize the rows of the standard tableau of shape $\l$ \cite[Sec. 8]{kyoto}. Now, $x_\l \widetilde{H} \cong x_\mu \widetilde{H}$ when $W_{J(\l)}$ and $W_{J(\mu)}$ are conjugate in $W$, which occurs if and only if $\l^+=\mu^+$. So, for every $\l \in \Lambda(n,m)$, we have $x_\l \widetilde{H} \cong x_{\lambda^+} \widetilde{H}$. It follows that
\begin{equation}\label{klambda}
V^{\otimes m} \cong \underset{\l \in \Lambda^+(n,m)}{\oplus} x_\l \widetilde{H} ^{\oplus k_\l(n,m)}
\end{equation}
for some non-negative integers $k_\l(n,m)$, $\l \in \Lambda^+(n,m)$. The algebra $A=\End_{\widetilde{H}}(\underset{\l \in \Lambda^+(n,m)}{\oplus} x_\l \widetilde{H})$ is Morita equivalent to $S_t(n,m)$. \\

For a commutative ring $R$ and a homomorphism $\z \to R$, $t \mapsto q$, let $S_t(n,m) \otimes_\z R = S_q(n,m)_R$. If $R=k$ is a field and there is an algebra homomorphism $\z \to k$, $t \mapsto q$, then the $q$-Schur algebra $S_q(n,m)$ of bidegree $(n,m)$ is quasi-hereditary. Thus, the category $\text{mod-}S_q(n,m)_k$ of right $S_q(n,m)_k$-modules is a highest weight category with poset $\Lambda^+(n,m)$ \cite[Thm. 2.8]{dps2}. The poset structure $\trianglelefteq$ on the set $\Lambda^+(n,m)$ is the dominance order, defined by $\l=(\l_1,\l_2,\ldots) \trianglelefteq \mu=(\mu_1,\mu_2,\ldots)$ if and only if $\l_1 \leq \mu_1, \l_1+\l_2 \leq \mu_1+\mu_2, \ldots$. For every $\l \in \Lambda^+(n,m)$, there exists a right $S_t(n,m)$-module $\widetilde{\Delta}(\l)$ such that $\widetilde{\Delta}(\l)_k=\Delta(\l)$ is the standard object in $\text{mod-}S_q(n,m)_k$ corresponding to the partition $\l$. Similarly, for every $\l \in \Lambda^+(n,m)$, there exists a right $S_t(n,m)$-module $\widetilde{\nabla}(\l)$ such that $\widetilde{\nabla}(\l)_k=\nabla(\l)$ is the costandard object in $\text{mod-}S_q(n,m)_k$ corresponding to the partition $\l$. The irreducible $S_q(n,m)_k$-modules are indexed by $\Lambda^+(n,m)$. We will denote the irreducible $S_q(n,m)_k$-module corresponding to a partition $\l \in \Lambda^+(n,m)$ by $L^k(\l)$. \\

The category $S_q(n,m)_k\text{-mod}$ of left $S_q(n,m)_k$-modules is also a highest weight category with poset $\Lambda^+(n,m)$. We will denote the standard object of $S_q(n,m)_k\text{-mod}$ corresponding to $\lambda \in \Lambda^+(n,m)$ by $\Delta^{\text{left}}(\l)$, and we will denote the costandard object of $S_q(n,m)_k\text{-mod}$ corresponding to $\lambda \in \Lambda^+(n,m)$ by $\nabla^{\text{left}}(\l)$. The standard objects of $S_q(n,m)_k\text{-mod}$ are related to the costandard objects of $\text{mod-}S_q(n,m)_k$ via the linear dual; for any $\l \in \Lambda^+(n,m)$, $\nabla(\l)=\Delta^{\text{left}}(\l)^*$. Similarly, the costandard objects of $S_q(n,m)_k\text{-mod}$ are related to the standard objects of $\text{mod-}S_q(n,m)_k$ via duality: for any $\l \in \Lambda^+(n,m)$, $\Delta(\l)=\nabla^{\text{left}}(\l)^*$ \cite[pg. 707]{ddpw}.

\section{The Indexing of the Irreducible $k \GL_n(q)$-Modules}

In this paper, it will be necessary to use results of Cline, Parshall, and Scott \cite{kyoto}  together with those of Dipper and James \cite{dj} and Dipper and Du \cite{dd}. The indexing of the irreducible $kG$-modules is not consistent across these papers; in particular, the indexing used by CPS differs from the two indexings used by Dipper and James and Dipper and Du. Since the methods used by CPS \cite{kyoto} are most relevant to the proofs appearing in Sections 4 and 5 of this paper, we will describe the indexing of CPS in greater detail than the indexings of DJ and DD.

\subsection{The CPS Indexing of the Irreducible $k \GL_n(q)$-Modules}

In this section, we will describe how the Morita equivalence constructed in \cite[Sec. 9]{kyoto} leads to an indexing of the irreducible right $k \GL_n(q)$-modules. To establish the results of Sections 4 and 5 of this paper, we will need to assume that the characteristic $r>0$ of our algebraically closed field $k$ does not divide $q(q-1)$. However, for the developments of Section 3, it is sufficient to assume that $r>0$ and $r \centernot\mid q$. Here, we will work with an $r$-modular system $(\O,K,k)$ (where $\O$ is a discrete valuation ring, $K$ is the quotient field of $\O$, and $k$ is the residue field). We will assume that the quotient field $K$ of $\O$ is large enough so that it is a splitting field for $\GL_n(q)$.

\subsubsection{A decomposition of $\O \GL_n(q)$ into sums of blocks}

Let $\C$ denote a fixed set of representatives of the conjugacy classes in $\GL_n(q)$. The finite group $\GL_n(q)$ may be obtained as the set of fixed points of the algebraic group $\GL_n$ (over the algebraic closure of $\mathbb{F}_q$) under a Frobenius morphism. Let $\mathbb{G}_{ss}$ denote the set of semisimple elements in the algebraic group $\GL_n$, and let $G_{ss}=\mathbb{G}_{ss} \cap \GL_n(q)$. We set $\C_{ss}=G_{ss} \cap \C$ (so, $\C_{ss}$ consists of the representatives of the conjugacy classes in $\GL_n(q)$ which are semisimple when viewed as elements of the algebraic group $\GL_n$). Let $\C_{ss,r'}$ denote the set of elements in $\C_{ss}$ of order prime to $r$. \\

Following \cite[Ch. IV, Sec. 2]{macdonald}, we will describe the structure of the centralizer $Z_G(s)$ of an element $s \in G_{ss}$. Let $\mathbb{F}_q[t]$ denote the polynomial ring over $\mathbb{F}_q$ in an indeterminate $t$, and let $V$ be the $n$-dimensional $\mathbb{F}_q$-vector space $V=(\mathbb{F}_q )^n$. Let $V_s$ be the $\mathbb{F}_q[t]$-module with $V_s=V$ as an $\mathbb{F}_q$-vector space and $t$-action given by $t.v=s.v$ for all $v \in V$. Since $\mathbb{F}_q[t]$ is a principal ideal domain, the $\mathbb{F}_q[t]$ module $V_s$ has a direct sum decomposition with summands of the form $\mathbb{F}_q[t]/(f)^{m_f}$, where $f \in \mathbb{F}_q[t]$ is an irreducible monic polynomial and $m_f \geq 1$ is an integer. In fact, since $s$ is semisimple, $m_f=1$ for all elementary divisors $f$ of $V_s$. Suppose that $V_s$ has $m(s)$ distinct elementary divisors $f_1,f_2,\ldots,f_{m(s)}$ (where $f_1,f_2,\ldots,f_{m(s)} \in \mathbb{F}_q[t]$ are monic irreducible polynomials), and let $n_i(s)$ denote the number of times a summand  $\mathbb{F}_q[t]/(f_i)$ occurs in the direct sum decomposition of $V_s$. We can write 
$$V_s \cong \overset{m(s)}{\underset{i=1}{\bigoplus}} \Big( \mathbb{F}_q[t]/(f_i) \Big)^{\oplus n_i(s)}.$$ 
For $1 \leq i \leq m(s)$, let $a_i(s)$ denote the degree of the polynomial $f_i$. Then, $\mathbb{F}_q[t]/(f_i) \cong \mathbb{F}_{q^{a_i(s)}}$ for each $i$ and $V_s \cong \overset{m(s)}{\underset{i=1}{\bigoplus}}  \Big( \mathbb{F}_{q^{a_i(s)}} \Big)^{ n_i(s)}$. Since $V$ is $n$-dimensional over $\mathbb{F}_q$, we have $\overset{m(s)}{\underset{i=1}{\sum}} a_i(s) n_i(s)=n$. The centralizer $Z_G(s)$ of $s$ in $G$ can now be identified with the set of automorphisms of $V_s$. For any $1 \leq i \leq m(s)$, $\text{Aut} \Big(  ({\mathbb{F}_{q^{a_i(s)}}})^{n_i(s)} \Big) \cong \GL_{n_i(s)}(q^{a_i(s)})$, and it follows that $$Z_G(s) \cong \text{Aut} (V_s) \cong \overset{m(s)}{\underset{i=1}{\prod}} \GL_{n_i(s)}(q^{a_i(s)}),$$ with $\overset{m(s)}{\underset{i=1}{\sum}} a_i(s) n_i(s)=n$. \\

For any $s \in G_{ss}$, we let $\underline{n}(s)=(n_1(s), \ldots, n_{m(s)}(s))$ be the partition of $n$ corresponding to this direct product decomposition of $Z_G(s)$. Let $\Lambda^+(\underline{n}(s))$ denote the set of multipartitions of $\underline{n}(s)$. An element $\l \in \Lambda^+(\underline{n}(s))$ is of the form $\l=(\l^{(1)}, \ldots, \l^{(m(s))})$, where $\l^{(1)} \vdash n_1(s)$, $\ldots$, $\l^{(m(s))} \vdash n_{m(s)}(s)$. The ordinary irreducible characters of $G$ (which were first parameterized by Green) may be indexed by pairs $(s, \l)$, where $s \in \C_{ss}$ and $\l \in \Lambda^+(\underline{n}(s))$ \cite[pg. 29]{kyoto}. We will denote the irreducible character of $G$ corresponding to the pair $(s,\l)$ by $\chi_{s,\l}$. \\

The group algebra $\O \GL_n(q)$ decomposes as a direct sum of two-sided ideals called blocks, with every irreducible $\O \GL_n(q)$-module belonging to a unique block. Given an element $s \in \C_{ss,r'}$, let $B_{s,G}$ be the sum of the blocks of $\O \GL_n(q)$ which contain a character of the form $\chi_{st,\l}$, where $t \in Z_{\GL_n(q)}(s)$ is an $r$-element. Then, $\O \GL_n(q)$ has the direct sum decomposition
$$\O \GL_n(q) = \underset{s \in \C_{ss,r'}}{\oplus} B_{s,G}.$$

\subsubsection{An $\O \GL_n(q)$-module whose endomorphisms are described by $q$-Schur algebras}

Given an element $s \in \C_{ss,r'}$ with centralizer $Z_{G}(s) \cong \overset{m(s)}{\underset{i=1}{\prod}} \GL_{n_i(s)}(q^{a_i(s)})$ (where $\overset{m(s)}{\underset{i=1}{\sum}} a_i(s) n_i(s)=n$), let $L_s(q)$ be the subgroup of  $\GL_n(q)$ defined by $$L_s(q)= \overset{m(s)}{\underset{i=1}{\prod}} \GL_{a_i(s)n_i(s)}(q).$$
Let $H_s(q)$ be the subgroup of $L_s(q)$ defined by 
$$H_s(q)=\overset{m(s)}{\underset{i=1}{\prod}} \GL_{a_i(s)}(q)^{n_i(s)}.$$
(So, $H_s(q)$ consists of block-diagonal matrices in $L_s(q)$.) \\

$L_s(q)$ is a Levi subgroup of a parabolic subgroup of $G$, and $H_s(q)$ is a Levi subgroup of a parabolic subgroup of $L_s(q)$. Therefore, we have Harish-Chandra induction functors

$$R_{H_s(q)}^{L_s(q)}: \text{mod-}\O H_s(q) \to \text{mod-}\O L_s(q) \text{, and } $$
$$R_{L_s(q)}^{\GL_n(q)}: \text{mod-}\O L_s(q) \to\text{mod-}\O \GL_n(q).$$

To any element $s \in \C_{ss,r'}$, we associate a $KH_s(q)$-module $C_K(s)$, which is a tensor product of certain irreducible cuspidal modules defined by Dipper and James (a more precise description of the module $C_K(s)$ can be found in \cite[9.6]{kyoto}). The representation $C_K(s)$ contains an $\O H_s(q)$-lattice $C_\O(s)$, and we can define a right $\O L_s(q)$-module associated to the element $s \in \C_{ss,r'}$ by $$M_{s,L_s(q),\O}=R_{H_s(q)}^{L_s(q)} C_\O(s).$$ By \cite[(2.17)]{dj}, the endomorphism algebra of the $\O L_s(q)$-module $M_{s,L_s(q),\O}$ is a tensor product of Hecke algebras. Specifically, 

$$\End_{\O L_s(q)}(M_{s,L_s(q),\O}) \cong \overset{m(s)}{\underset{i=1}{\otimes}} H(\S_{n_i(s)}, \O,q^{a_i(s)}).$$
Now, given a multipartition $\l \vdash \underline{n}(s)$, let 
$$y_\l=y_{\l^{(1)}} \otimes \cdots \otimes y_{\l^{(m(s))}} \in \overset{m(s)}{\underset{i=1}{\otimes}} H(\S_{n_i(s)}, \O,q^{a_i(s)})$$ 
(the elements $y_{\l^{(i)}} \in H(\S_{n_i(s)}, \O,q^{a_i(s)})$ are defined in Section 2.1). Since $$\End_{\O L_s(q)}(M_{s,L_s(q),\O}) \cong \overset{m(s)}{\underset{i=1}{\otimes}} H(\S_{n_i(s)},\O,q^{a_i(s)}),$$ it follows that $y_\l \in \overset{m(s)}{\underset{i=1}{\otimes}} H(\S_{n_i(s)}, \O,q^{a_i(s)})$ acts on $M_{s,L_s(q),\O}$ as an $\O L_s(q)$-module endomorphism. In particular, $y_\l M_{s,L_s(q),\O}$ is an $\O$-submodule of $M_{s,L_s(q),\O}$. Let $\sqrt{y_\l M_{s,L_s(q),\O}}$ denote the smallest $\O$-submodule of $M_{s,L_s(q),\O}$ such that $y_\l M_{s,L_s(q),\O} \subseteq M_{s,L_s(q),\O}$ and $M_{s,L_s(q),\O}/\sqrt{y_\l M_{s,L_s(q),\O}}$ is $\O$-torsion free. \\

We can now define an $\O L_s(q)$-module $\widehat{M}_{s,L_s(q),\O}$ whose endomorphism algebra is a tensor product of $q^{a_i(s)}$-Schur algebras (where the integers $a_i(s)$ are determined by the structure of the centralizer $Z_G(s)$ of $s$ in $G$). First, for every $1 \leq i \leq m(s)$ and $\mu \vdash n_i(s)$, we write
$$\underset{\mu \in \Lambda(n_i(s),n_i(s))}{\oplus} x_\mu H \cong \underset{\mu \in \Lambda^+(n_i(s),n_i(s))}{\oplus} x_\mu H ^{\oplus k_\mu(n_i(s),n_i(s))},$$
with the positive integers $k_\mu(n_i(s),n_i(s))$ defined as in Section 2.2, (\ref{klambda}). Given a multipartition $\l=(\l^{(1)}, \ldots, \l^{(m(s))}) \vdash \underline{n}(s)$, let $m_\l=\overset{m(s)}{\underset{i=1}{\prod}} k_{\l^{(i)}}(n_i(s),n_i(s))$. We define the $\O L_s(q)$-module $\widehat{M}_{s,L_s(q),\O}$ by
$$\widehat{M}_{s,L_s(q),\O}= \underset{\l \vdash \underline{n}(s)}{\oplus} \sqrt{y_\l M_{s,L_s(q),\O}} ^{\oplus m_\l}.$$
By \cite[Lem. 9.11]{kyoto},
$$\End_{\O L_s(q)}(\widehat{M}_{s,L_s(q),\O}) \cong \overset{m(s)}{\underset{i=1}{\otimes}} S_{q^{a_i(s)}}(n_i(s),n_i(s))_\O.$$

We can now apply the Harish-Chandra induction functor $R_{L_s(q)}^{\GL_n(q)}$ to the $\O L_s(q)$-module $\widehat{M}_{s,L_s(q),\O}$ to obtain an $\O \GL_n(q)$-module module whose endomorphisms are described by $q^{a_i(s)}$-Schur algebras. Given $s \in  \C_{ss,r'}$, let 
\begin{equation}\label{msgo}
\widehat{M}_{s,\GL_n(q),\O} = R_{L_s(q)}^{\GL_n(q)} \widehat{M}_{s,L_s(q),\O}.
\end{equation}
(To simplify the notation, we will denote $\widehat{M}_{s,\GL_n(q),\O}$ simply by $\widehat{M}_{s,G,\O}$.) Then, $\widehat{M}_{s,\GL_n(q),\O}$ is the desired $\O \GL_n(q)$-module satisfying 
$$\End_{\O \GL_n(q)}(\widehat{M}_{s,G,\O}) \cong \overset{m(s)}{\underset{i=1}{\otimes}} S_{q^{a_i(s)}}(n_i(s),n_i(s))_\O$$
\cite[Cor. 9.14]{kyoto}. 

\subsubsection{A Morita Equivalence for $\GL_n(q)$.}

Viewing $\widehat{M}_{s,G,\O}$ as a module for the subalgebra $B_{s,G}$ of $\O \GL_n(q)$, let
$J_s=\Ann_{B_{s,G}}(\widehat{M}_{s,G,\O})$. Then, by \cite[Lem. 9.1]{kyoto} and \cite[Thm. 9.2]{kyoto}, the functor 
$$F_s(-)=\Hom_{B_{s,G}/J_s}(\widehat{M}_{s,G,\O},-): \text{mod-}B_{s,G}/J_s \to  \text{mod-}\End_{B_{s,G}}(\widehat{M}_{s,G,\O})$$
gives a Morita equivalence. \\

In fact, since $\End_{B_{s,G}}(\widehat{M}_{s,G,\O}) \cong \End_{\O G}(\widehat{M}_{s,G,\O}) \cong \overset{m(s)}{\underset{i=1}{\otimes}} S_{q^{a_i(s)}}(n_i(s),n_i(s))_\O$, the functor $F_s$ gives a Morita equivalence 
\begin{equation}\label{equivthefirst}
 \text{mod-}B_{s,G}/J_s(q) \overset{\sim}{\to}  \text{mod-}\overset{m(s)}{\underset{i=1}{\otimes}} S_{q^{a_i(s)}}(n_i(s),n_i(s))_\O.
\end{equation}
Let $J=\underset{s \in \C_{ss,r'}}{\sum} J_s$. Since the group algebra $ \O \GL_n(q)$ may be decomposed as 
$$\O \GL_n(q) = \underset{s \in \C_{ss,r'}}{\oplus} B_{s,G},$$ 
it follows that $\O \GL_n(q)/J \cong \underset{s \in \C_{ss,r'}}{\oplus} B_{s,G}/J_s$. Therefore, taking direct sums over $s \in \C_{ss,r'}$ in (\ref{equivthefirst}), we have a Morita equivalence

$$F(-)=\Hom_{\O \GL_n(q)/J}( \underset{s \in \C_{ss,r'}}{\oplus} \widehat{M}_{s,G,\O},-):$$ 
$$ \text{mod-}\O \GL_n(q)/J \to  \text{mod-}\underset{s \in \C_{ss,r'}}{\oplus} \overset{m(s)}{\underset{i=1}{\otimes}} S_{q^{a_i(s)}}(n_i(s),n_i(s))_\O.$$
By \cite[Thm. 9.17]{kyoto}, this Morita equivalence remains valid upon base change to $k$; so, the category of right $k \GL_n(q)/J_k$-modules is Morita equivalent to the category of right $\underset{s \in \C_{ss,r'}}{\oplus} \overset{m(s)}{\underset{i=1}{\otimes}} S_{q^{a_i(s)}}(n_i(s),n_i(s))_k$-modules. More precisely, there is a Morita equivalence
$$\bar{F}(-)=\Hom_{k \GL_n(q)/J_k}( \underset{s \in \C_{ss,r'}}{\oplus} \widehat{M}_{s,G,k},-):$$ 
$$ \text{mod-}k \GL_n(q)/J_k \to  \text{mod-}\underset{s \in \C_{ss,r'}}{\oplus} \overset{m(s)}{\underset{i=1}{\otimes}} S_{q^{a_i(s)}}(n_i(s),n_i(s))_k$$
(where $\widehat{M}_{s,G,k}=\widehat{M}_{s,G,\O} \otimes_{\O} k$).

\subsubsection{An Indexing of the Irreducible $k\GL_n(q)$-modules (CPS)}
By \cite[Thm. 9.17]{kyoto}, the algebras $k \GL_n(q)$ and $k \GL_n(q)/J_k$ have the same irreducible modules. Thus, the Morita equivalence 
$$\bar{F}(-):  \text{mod-}k \GL_n(q)/J_k \to  \text{mod-}\underset{s \in \C_{ss,r'}}{\oplus} \overset{m(s)}{\underset{i=1}{\otimes}} S_{q^{a_i(s)}}(n_i(s),n_i(s))_k$$
may be used to index the irreducible $k \GL_n(q)$-modules \cite[pg. 35]{kyoto}. Since the irreducible $S_{q^{a_i(s)}}(n_i(s),n_i(s))_k$-modules are indexed by the set $\Lambda^+(n_i(s))$ of partitions of $n_i(s)$, the irreducible $\overset{m(s)}{\underset{i=1}{\otimes}} S_{q^{a_i(s)}}(n_i(s),n_i(s))_k$-modules are indexed by the set of multipartitions $\Lambda^+(\underline{n}(s))$. Therefore, the irreducible $\underset{s \in \C_{ss,r'}}{\oplus} \overset{m(s)}{\underset{i=1}{\otimes}} S_{q^{a_i(s)}}(n_i(s),n_i(s))_k$-modules (and, consequently, the irreducible $k\GL_n(q)$-modules) are indexed by pairs $(s, \l)$, where $s \in \C_{ss,r'}$ and $\l$ is a multipartition of $\underline{n}(s)$. We will denote the irreducible $k \GL_n(q)$-module corresponding to the pair $(s,\l)$ by $D(s,\l)$. \\

A special class of irreducible $kG$-modules is obtained by taking $s=1$ above. Since $Z_{\GL_n(q)}(1)=\GL_n(q)$, $\underline{n}(1)=(n)$. Therefore, the irreducible $kG$-modules corresponding to the element $s=1$ may be labeled as $D(1,\l)$, with $\l$ a partition of $n$. By \cite[Thm. 2.1]{scott}, the irreducible $kG$-modules $D(1, \l)$, $\l \vdash n$, are precisely the composition factors of the permutation module $k|_B^G$. (An irreducible module $D(1,\l)$ may appear more than once as a composition factor of $k|_B^G$.) By \cite[Rem. 9.18(b)]{kyoto}, the trivial module $k$ may be parameterized as $D(1,(1^n))$, where $(1^n)$ is the partition of $n$ with each part equal to 1.

\subsection{The DD and CPS Indexings of Unipotent Principle Series Representations}

In \cite{dd}, Dipper and Du use two indexings of the irreducible $k  \GL_n(q)$-modules, both based on work of Dipper and James \cite{qweyl} and Dipper \cite{d2}.  The first indexing, which is presented in \cite[(4.2.3)]{dd}, relies on the Hecke functors between the category $\text{mod-}kG$ of right $kG$-modules and the category of right modules for an appropriate Hecke algebra. The second indexing, which is presented in  \cite[(4.2.11))]{dd}, relies on the $q$-Schur functors between $\text{mod-}kG$ and the category of right modules for an appropriate $q$-Schur algebra. By \cite[(4.2.11, (3))]{dd}, these indexings agree for the irreducible $kG$-modules belonging to the unipotent principal series ${\rm{Irr}}_k(G|B)$. \\

As in the introduction, let
$$l=
\begin{cases}
r &\text{ if } |q \pmod{r}|=1 \\
|q \pmod{r}| &\text{ if } |q \pmod{r}|>1.
\end{cases}
$$
A partition $\l \vdash n$ is called $l$-regular if every part of $\l$ occurs less than $l$ times. By \cite[Thm. 7.6]{dj2}, both DD indexings yield ${\rm{Irr}}_k(G|B)=\{ D'(1,\l) | \l \vdash n, ~\l \text{ is } l\text{-regular} \}$.\footnote{In \cite{dd}, the modules $D'(1,\l)$, $\l$ $l$-regular, are denoted by $D(1,\l)$. Here, we have changed the notation to distinguish between the CPS and DD indexings.} \\

\begin{lemma}\label{indexcompare}
Let $\l$ be an $l$-regular partition of $n$, and let $\l'$ be the dual partition. (So, the Young diagram of shape $\l'$ is the transpose of the Young diagram of shape $\l$.) If $D'(1,\l)$ is the irreducible $kG$-module corresponding to $\l$ in the DD indexing, then $D'(1,\l) \cong D(1,\l')$, where $D(1,\l')$ is the irreducible $kG$-module corresponding to $\l'$ in the CPS indexing. Thus, in the CPS indexing, ${\rm{Irr}}_k(G|B)=\{ D(1,\l) | \l \vdash n,~ \l \text{ is } l\text{-restricted} \}$.
\end{lemma}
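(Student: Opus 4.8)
The plan is to compare the CPS indexing with the second DD indexing — the one arising from the $q$-Schur functors — since by \cite[(4.2.11,(3))]{dd} the two DD indexings already agree on ${\rm{Irr}}_k(G|B)$. Both indexings transport the simple $kG$-modules in ${\rm{Irr}}_k(G|B)$ to simple modules over a $q$-Schur algebra of bidegree $(n,n)$: the DD one through the $q$-Schur functor of \cite[(4.2.11)]{dd}, the CPS one through the Morita equivalence $\bar{F}$ of Section 3.1.2 applied to the $s=1$ summand $\widehat{M}_{1,G,k}$. So in each case the assertion is that a certain functor sends $D'(1,\l)$, respectively $D(1,\mu)$, to the simple module of a $q$-Schur algebra carrying the partition label $\l$, respectively $\mu$. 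The one comparison already at hand orients everything: the trivial module is $D(1,(1^n))$ in the CPS indexing by \cite[Rem. 9.18(b)]{kyoto}, and it is $D'(1,(n))$ in the DD indexing, since it corresponds to the trivial module of the relevant Hecke algebra and hence to the Weyl-module label $(n)$; as $(n)'=(1^n)$, this is the case $\l=(n)$ of the claimed identity $D'(1,\l)\cong D(1,\l')$.

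The transpose enters through the sign twist built into the CPS construction. The bimodule $\widehat{M}_{1,G,\O}$ is assembled in Section 3.1.2 from the submodules $\sqrt{y_\mu M_{1,G,\O}}$, where $M_{1,G,\O}=R^{G}_{T}(\O)$ is (up to the standard identification) the permutation module $\O|_B^G$, with $\End_{\O G}(\O|_B^G)\cong\widetilde{H}$, and $y_\mu$ is the \emph{sign-type} element of $\widetilde{H}$ attached to $J(\mu)$ in Section 2.1; by contrast, the module underlying the DD $q$-Schur functor is built from the \emph{permutation-type} pieces $x_\mu\widetilde{H}$. These two choices are matched by the $\z$-involution $\Phi$ of Section 2.1: $\Phi$ swaps the index and sign characters of each parabolic subalgebra $\widetilde{H}_J$, is compatible with induction, and hence satisfies $(x_J\widetilde{H})^\Phi\cong y_J\widetilde{H}$. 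Combined with the duality $D_{\widetilde{H}}=(-)^{*\iota}$ of Section 2.1 (and the relations $\nabla(\l)=\Delta^{\text{left}}(\l)^{*}$ and $\Delta(\l)=\nabla^{\text{left}}(\l)^{*}$ between the two sides), this shows that the $y$-type $q$-Schur algebra appearing in \cite[Lem. 9.11]{kyoto} is identified with the $x$-type $q$-Schur algebra of Section 2.2 by an equivalence that interchanges standard and costandard objects and reverses the dominance order $\trianglelefteq$ on $\Lambda^+(n,n)$. Such an equivalence identifies the simple module $L^k(\l)$ of one algebra with the simple module $L^k(\l')$ of the other. Reading this through the $q$-Schur and Hecke functors relating $\text{mod-}kG$ to these algebras — which on ${\rm{Irr}}_k(G|B)$ match $D'(1,\l)$ and $D(1,\mu)$ with the corresponding simple $q$-Schur modules — yields $D'(1,\l)\cong D(1,\l')$ for every $l$-regular partition $\l\vdash n$. (Alternatively, since the transpose is the only order anti-automorphism of the dominance poset on partitions of $n$, once the comparison is known to reverse $\trianglelefteq$ and, by the trivial-module computation, to send $(n)$ to $(1^n)$, it must be $\l\mapsto\l'$.)

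It remains to read off the set statement. By \cite[Thm. 7.6]{dj2} (and \cite{dd}), ${\rm{Irr}}_k(G|B)=\{D'(1,\l)\mid\l\vdash n,\ \l\text{ is }l\text{-regular}\}$, so the isomorphism $D'(1,\l)\cong D(1,\l')$ gives ${\rm{Irr}}_k(G|B)=\{D(1,\l')\mid\l\vdash n,\ \l\text{ is }l\text{-regular}\}$. By definition a partition $\mu\vdash n$ is $l$-restricted precisely when $\mu'$ is $l$-regular, so $\{\l'\mid\l\text{ is }l\text{-regular}\}=\{\mu\mid\mu\text{ is }l\text{-restricted}\}$, and hence ${\rm{Irr}}_k(G|B)=\{D(1,\mu)\mid\mu\vdash n,\ \mu\text{ is }l\text{-restricted}\}$, as claimed.

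I expect the main obstacle to be bookkeeping rather than any single deep step: one must line up the Morita equivalence $\bar{F}$ of Section 3.1.2, the Hecke functor of \cite[(4.2.3)]{dd}, and the $q$-Schur functor of \cite[(4.2.11)]{dd}, keeping precise track of all the dualities and twists in play — left versus right modules, the linear dual $(-)^{*}$, the anti-involution $\iota$, the involution $\Phi$, Specht versus dual Specht modules, and standard versus costandard objects — so that the net effect on partition labels is exactly the transpose $\l\mapsto\l'$, and not the identity or the Mullineux involution (the latter being what the sign twist induces on the simple modules of the \emph{Hecke} algebra, as opposed to the $q$-Schur algebra). A secondary point is uniformity: the trivial-module computation fixes the correct branch of the comparison but does not by itself determine the bijection, so it is the $\Phi$-twist argument (or, equivalently, the rigidity of the dominance poset) that does the real work.
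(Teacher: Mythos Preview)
Your proposal takes a genuinely different and considerably more laborious route than the paper. You try to \emph{derive} the transpose $\l\mapsto\l'$ structurally, by tracking how the involution $\Phi$ (which swaps $x_J\widetilde{H}$ and $y_J\widetilde{H}$) mediates between the $x$-type $q$-Schur algebra underlying the DD functor and the $y$-type construction underlying the CPS Morita equivalence, and then arguing that this swap reverses dominance and hence acts as transposition on partition labels. The underlying picture is sound --- it is essentially the content of results such as \cite[Thm.~7.7]{dps}, which the paper invokes later in Proposition~\ref{delta} --- but, as you yourself anticipate, making it into a proof requires aligning several dualities and twists simultaneously, and your fallback argument via ``rigidity of the dominance poset'' would need independent justification.

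The paper sidesteps all of this bookkeeping. Its proof rests on a single observation: $k|_B^G$ is a direct summand of $\widehat{M}_{1,G,k}$ (by \cite[Rem.~9.18(b)]{kyoto}), and since $\bar{F}(D'(1,\l))$ is already irreducible, the other summands $\widehat{M}_{s,G,k}$ contribute nothing, giving
\[
\bar{F}(D'(1,\l))\;\cong\;\Hom_{kG}(k|_B^G,\,D'(1,\l)).
\]
But the right-hand side is, on irreducibles, precisely Dipper's $q$-Schur functor $S_1$ of \cite[Sec.~6]{d2}. Now \cite[(4.2.11,(3))]{dd} already records that $D'(1,\l)=\check{S}_1(L^k(\l'))$ with $\check{S}_1$ a right inverse of $S_1$, so $S_1(D'(1,\l))\cong L^k(\l')$ and hence $\bar{F}(D'(1,\l))\cong L^k(\l')$, i.e.\ $D'(1,\l)\cong D(1,\l')$. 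The transpose thus comes for free from the DD result, and no $\Phi$-twist analysis is needed. In effect, your approach explains \emph{why} the transpose appears in DD's formula; the paper simply \emph{uses} that it does.
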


\begin{proof}
We fix an $l$-regular partition $\l \vdash n$ and apply the CPS functor $\bar{F}$ (defined in Section 3.1.4) to the irreducible $kG$-module $D'(1,\l)$. Since $D'(1,\l) \in {\rm{Irr}}_k(G|B)$, $D'(1,\l)$ is in the head of $k|_B^G$, which means that $\Hom_{kG}(k|_B^G,D'(1,\l)) \neq 0$. Since $k|_B^G$ is a direct summand of the module $\widehat{M}_{1,G,k}$ by \cite[Rem. 9.18 (b)]{kyoto} and $\bar{F}(D'(1,\l))$ is irreducible, we can compute

\begin{align*}
\bar{F}(D'(1,\l)) &=\Hom_{kG/J_k}(\underset{s \in \C_{ss,r'}}{\oplus} \widehat{M}_{s,G,k},D'(1,\l)) \\
&\cong \Hom_{kG}(\underset{s \in \C_{ss,r'}}{\oplus} \widehat{M}_{s,G,k},D'(1,\l)) \\
&\cong \Hom_{kG}(k|_B^G,D'(1,\l)).
\end{align*}
In \cite[(4.2.11)]{dd}, Dipper and Du describe $q$-Schur functors $S_1:  \text{mod-}kG \to \text{mod-}S_q(n,n)_k$ and $\check{S}_1:  \text{mod-}S_q(n,n)_k \to \text{mod-}kG$, where $\check{S}_1$ is a right inverse of $S_1$. Tracing through the definition of $S_1: \text{mod-}kG \to \text{mod-}S_q(n,n)_k$ given in \cite[Sec. 6]{d2} (and using the fact that $S_1(D'(1,\l))$ must be an irreducible $S_q(n,n)_k$-module), we find that $\Hom_{kG}(k|_B^G,D'(1,\l)) \cong S_1(D'(1,\l))$. Now, $D'(1,\l)=\check{S}_1(L^k(\l'))$ by \cite[4.2.11 (3)]{dd}. So, using the fact that the functor $\check{S}_1$ is a right inverse of $S_1$, we have $S_1(D'(1,\l)) \cong S_1(\check{S}_1(L^k(\l'))) \cong L^k(\l')$. Therefore, $\bar{F}(D'(1,\l)) \cong L^k(\l')$, and it follows that $D'(1,\l)=D(1,\l')$ in the indexing of CPS when $\l$ is an $l$-regular partition of $n$.
\end{proof}

\section{Self-Extensions of Unipotent Principal Series Representations}

Let $G=G=\GL_n(q)$ (where $q$ is a power of a prime $p$), and let $B$, $U$, and $T$ be the subgroups of $G$ described in the introduction. Let $k$ be an algebraically closed field of characteristic $r >0$ such that $r \centernot\mid q(q-1)$. Let $Y$ be an irreducible $kG$-module belonging to the unipotent principal series $\text{Irr}_k(G|B)$ and $V$ be an arbitrary irreducible $kG$-module. In Theorem \ref{general1}, we will show that $\Ext^1_{kG}(Y,V)$ is isomorphic to an $\Ext^1$ group over a $q$-Schur algebra. The proof of this result relies on techniques and methods of \cite{kyoto}. \\

Let $J_k \unlhd kG$ be the ideal described in Section 3 such that there is a Morita equivalence $$\bar{F}: \text{mod-}kG/J_k \to \text{mod-}\underset{s \in \C_{ss,r'}}{\oplus} \overset{m(s)}{\underset{i=1}{\otimes}} S_{q^{a_i(s)}}(n_i(s),n_i(s))_k.$$
Since $kG$ and $kG/J_k$ have the same irreducible modules \cite[9.17]{kyoto}, the Morita equivalence $\bar{F}$ yields an indexing of the irreducible $kG$-modules. In this indexing, the full set of irreducible $kG$-modules is given by
$$\{ D(s,\l) ~ | ~ s \in \C_{ss,r'}, \, \l \vdash \underline{n}(s) \}$$
(see Section 3.1.4). The irreducible constituents of the permutation module $k|_B^G$ are indexed by $D(1,\l)$ , $\l \vdash n$, and the irreducible $kG$-modules belonging to the unipotent principal series ${\rm{Irr}}_k(G|B)$ are indexed by $D(1,\l)$, where $\l \vdash n$ is $l$-restricted. \\

The $q$-Schur algebra $S_q(n,n)_k$ (defined in Section 2.2) has weight poset $\Lambda^+(n)=\{\l | \l \vdash n\}$, with the poset structure on $\Lambda^+(n)$ given by the dominance order. Therefore, the irreducible $S_q(n,n)_k$-modules are indexed by partitions $\l$ of $n$. Given a partition $\l \vdash n$, let $L^k(\l)$ denote the corresponding irreducible $S_q(n,n)_k$-module. By construction, $\bar{F}(D(1,\l))=L^k(\l)$ for any $\l \vdash n$. \\

In \cite{kyoto}, CPS establish a connection between $H^1$ calculations for $G=\GL_n(q)$ and $\Ext^1$ calculations for the $q$-Schur algebra $S_q(n,n)_k$. In particular, when $\char(k)=r \centernot\mid q(q-1)$,  
$$H^1(G,D(s,\mu)) \cong
\begin{cases}
\Ext^1_{S_q(n,n)_k}(L^k((1^n)),L^k(\mu)) \text{ if } s=1 \\
0 \text{ if } s \neq 1
\end{cases}
$$
for any $s \in \C_{ss,r'}$ and any multipartition $\mu \vdash \underline{n}(s)$ \cite[Thm. 10.1]{kyoto}. Since $k=D(1,(1^n))$, the result of \cite[Thm. 10.1]{kyoto} may be restated as follows: if $\char(k)=r \centernot\mid q(q-1)$,
$$\Ext^1_{kG}(D(1,(1^n)),D(s,\mu)) \cong
\begin{cases}
\Ext^1_{S_q(n,n)_k}(L^k((1^n)),L^k(\mu)) \text{ if } s=1 \\
0 \text{ if } s \neq 1
\end{cases}
$$
for any $s \in \C_{ss,r'}$ and any multipartition $\mu \vdash \underline{n}(s)$. \\

The key component of the proof of \cite[Thm. 10.1]{kyoto} is the observation that $k$ is in the head of the permutation module $k|_B^G$. But, $k$ is not the only irreducible $kG$-module contained in $\head(k|_B^G)$: any irreducible $kG$-module belonging to the unipotent principal series ${\rm{Irr}}_k(G|B)$ occurs as a composition factor of $\head(k|_B^G)$ \cite[4.2.6]{gj}. In particular, by Lemma \ref{indexcompare}, any $kG$-module of the form $D(1,\l)$, where $\l$ is an $l$-restricted partition of $n$, is contained in $\head(k|_B^G)$. Hence, the proof of \cite[Thm. 10.1]{kyoto} holds if we replace $k=D(1,(1^n))$ by an irreducible $kG$-module $D(1,\l)$ where $\l \vdash n$ is $l$-restricted. \\

\begin{theorem}\label{general1}
Suppose that $r \centernot\mid q(q-1)$ and that $\l$ is an $l$-restricted partition of $n$. Then, for any $s \in \C_{ss,r'}$ and any multipartition $\mu \vdash \underline{n}(s)$,

$$\Ext^1_{kG}(D(1,\l),D(s,\mu)) \cong
\begin{cases}
\Ext^1_{S_q(n,n)_k}(L^k(\l),L^k(\mu)) \text{ if } s=1 \\
0 \text{ if } s \neq 1.
\end{cases}
$$
\end{theorem}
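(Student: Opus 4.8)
The plan is to mimic the proof of \cite[Thm. 10.1]{kyoto} verbatim, replacing the trivial module $k=D(1,(1^n))$ by $D(1,\l)$ for $\l$ an $l$-restricted partition, and to check that every step of that argument used only the property that $k$ lies in $\head(k|_B^G)$ — a property shared by $D(1,\l)$ by Lemma \ref{indexcompare} together with \cite[4.2.6]{gj}. First I would recall how CPS reduce $\Ext^1_{kG}(k,D(s,\mu))$ to an $\Ext$ calculation over $kG/J_k$: since $k|_B^G$ is projective (using $r\centernot\mid|B|$) and it is a direct summand of $\widehat{M}_{1,G,k}$ by \cite[Rem. 9.18(b)]{kyoto}, one can compute $\Ext^1$ using a projective presentation of the first argument built from summands of the modules $\widehat{M}_{s,G,k}$. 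The crucial point is that $D(1,\l)$, like $k$, occurs in the head of the projective module $k|_B^G$ (equivalently, in the head of $\widehat{M}_{1,G,k}$), so the minimal projective cover of $D(1,\l)$ is a summand of $\widehat{M}_{1,G,k}$, i.e. it lives in the ``$s=1$ part.''

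Next I would carry out the translation through the Morita equivalence $\bar F$. Because $D(1,\l)$ is in the head of $\widehat M_{1,G,k}$, which is in turn a direct summand of the progenerator $\bigoplus_{s}\widehat M_{s,G,k}$ for the Morita equivalence $\bar F:\text{mod-}kG/J_k\to\text{mod-}\bigoplus_s\bigotimes_i S_{q^{a_i(s)}}(n_i(s),n_i(s))_k$, one shows (exactly as in CPS) that $\Ext^1_{kG}(D(1,\l),D(s,\mu))$ vanishes unless the target is ``supported'' on the $s=1$ component of the progenerator — this gives the $s\neq1$ case. For $s=1$, one reduces to $\Ext^1_{kG/J_k}(D(1,\l),D(1,\mu))$ (using that the ideal $J_k$ acts as zero on both irreducibles and, via the projectivity of $k|_B^G$, that this does not lose $\Ext$ information in degree one), and then applies the Morita equivalence $\bar F$ to get $\Ext^1_{\bigoplus_s\otimes_i S(\ldots)_k}(L^k(\l),L^k(\mu))$, which collapses to $\Ext^1_{S_q(n,n)_k}(L^k(\l),L^k(\mu))$ since both $L^k(\l)$ and $L^k(\mu)$ sit in the $s=1$ block $S_q(n,n)_k$ and $\bar F(D(1,\nu))=L^k(\nu)$ by construction.

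The main obstacle — really the only nontrivial point — is justifying that $\Ext^1_{kG}(D(1,\l),D(s,\mu))\cong\Ext^1_{kG/J_k}(D(1,\l),D(s,\mu))$, i.e. that passing to the quotient $kG/J_k$ does not alter $\Ext^1$ here. This is where the hypothesis $r\centernot\mid q(q-1)$ (hence $r\centernot\mid|B|$, hence $k|_B^G$ projective) enters decisively: one takes the projective cover $P$ of $D(1,\l)$, which is a summand of $k|_B^G$ and hence of $\widehat M_{1,G,k}$, so $P$ already carries a $kG/J_k$-module structure, and $\Hom_{kG}(\rad P, D(s,\mu))=\Hom_{kG/J_k}(\rad P,D(s,\mu))$ because $D(s,\mu)$ is annihilated by $J_k$. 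Once this identification is in place, everything else is a direct quotation of CPS with $k$ replaced by $D(1,\l)$; I would simply verify line-by-line that the cited lemmas (\cite[Lem. 9.1, Thm. 9.2, Lem. 9.11, Cor. 9.14, Thm. 9.17]{kyoto}) were invoked only through properties of $k|_B^G$ and $\widehat M_{s,G,\O}$ that are insensitive to which head constituent of $k|_B^G$ one chooses. The $s\neq1$ vanishing then follows because no composition factor of the target $D(s,\mu)$ with $s\neq1$ can appear in a module induced up from the $s=1$ Levi data, so $\Hom$ and hence $\Ext^1$ out of a projective supported at $s=1$ vanishes.
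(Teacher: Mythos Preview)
Your proposal is correct and follows essentially the same route as the paper. The paper writes down the surjection $k|_B^G\twoheadrightarrow D(1,\lambda)$ (rather than passing to the projective cover $P$), then uses that $k|_B^G$ is projective over both $kG$ and $kG/J_k$ (the latter via \cite[Rem.~9.18(c)]{kyoto}) to set up two parallel four-term exact sequences and a commutative diagram whose left two vertical $\Hom$ arrows are equalities, forcing $\Ext^1_{kG}\cong\Ext^1_{kG/J_k}$; then it applies $\bar F$ and, for $s\neq 1$, invokes the block decomposition directly. Two small points of precision in your write-up: where you say ``$P$ already carries a $kG/J_k$-module structure'' you actually need that $P$ is \emph{projective} over $kG/J_k$ (this is what \cite[Rem.~9.18(c)]{kyoto} supplies), since merely being a $kG/J_k$-module does not give you the second exact sequence computing $\Ext^1_{kG/J_k}$; and for $s\neq 1$ the cleanest argument is the paper's one-liner that $D(1,\lambda)$ and $D(s,\mu)$ lie in different block summands $B_{1,G}$ and $B_{s,G}$, so $\Ext^1_{kG}$ vanishes outright.
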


\begin{proof}
(The following proof is due to CPS \cite[10.1]{kyoto} -- no modifications are necessary when $k$ is replaced by an irreducible $kG$-module $D(1,\l)$ with $\l \vdash n$ $l$-restricted.) Let $s \in \C_{ss,r'}$, let $\mu \vdash \underline{n}(s)$, and let $D(s,\mu)$ be the corresponding irreducible $kG$-module. \\

Since $\l \vdash n$ is $l$-restricted, $D(1,\l) \in {\rm{Irr}}_k(G|B)$, which means that $D(1,\l) \subseteq \head(k|_B^G)$. Therefore, there exists a short exact sequence of $kG$-modules
\begin{equation}\label{cpsgenses}
0 \to \mathscr{L} \to k|_B^G \to D(1,\l) \to 0
\end{equation}
for some submodule $\mathscr{L}$ of $k|_B^G$. Now, since $r \centernot\mid q(q-1)$, we have $r \centernot\mid |B|$, which means that every $kB$-module is projective. Since induction from $B$ to $G$ is exact, $k|_B^G$ is a projective $kG$-module, so that $\Ext^1_{kG}(k|_B^G,D(s,\mu))=0$. Therefore, the short exact sequence (\ref{cpsgenses}) induces the exact sequence 
\begin{equation}\label{cpsgenses1}
\Hom_{kG}(k|_B^G,D(s,\mu)) \to \Hom_{kG}(\mathscr{L},D(s,\mu)) \to \Ext^1_{kG}(D(1,\l),D(s,\mu)) \to 0.
\end{equation}
Now, by \cite[Thm. 9.17]{kyoto}, any irreducible $kG$-module $D(s,\tau)$ ($\tau \vdash \underline{n}(s)$) is also an irreducible $kG/J_k$-module. So, $k|_B^G$ is a $kG/J_k$-direct summand of a projective $kG/J_k$-module and thus is itself a projective $kG/J_k$-module \cite[Rem. 9.18(c)]{kyoto}. It follows that (\ref{cpsgenses}) is a short exact sequence of $kG/J_k$-modules in which $k|_B^G$ is a projective $kG/J_k$-module. Therefore, (\ref{cpsgenses}) also induces the exact sequence
\begin{equation}\label{cpsgenses2}
\Hom_{kG/J_k}(k|_B^G,D(s,\mu)) \to \Hom_{kG/J_k}(\mathscr{L},D(s,\mu)) \to \Ext^1_{kG/J_k}(D(1,\l),D(s,\mu)) \to 0.
\end{equation}
The exact sequences (\ref{cpsgenses1}) and (\ref{cpsgenses2}) give rise to a commutative diagram

\begin{tikzpicture}[every node/.style={midway}]
  \matrix[column sep={1em}, row sep={1em}] at (0,0) {
    \node(R) {$\Hom_{kG}(k|_B^G,D(s,\mu))$}; & \node(S) {$\Hom_{kG}(\mathscr{L},D(s,\mu))$}; & \node(U) {$ \Ext^1_{kG}(D(1,\l),D(s,\mu))$}; & \node(0) {$0$};\\
    \node(W) {$\Hom_{kG/J_k}(k|_B^G,D(s,\mu))$}; & \node(T) {$\Hom_{kG/J_k}(\mathscr{L},D(s,\mu))$}; & \node(V) {$\Ext^1_{kG/J_k}(D(1,\l),D(s,\mu))$}; & \node(0') {$0$} ; \\
  };

  \draw[<-] (W) -- (R) node[anchor=east] {};
  \draw[->] (R) -- (S) node[anchor=south] {};
  \draw[->] (S) -- (T) node[anchor=west] {};
  \draw[->] (W) -- (T) node[anchor=north] {};

 \draw[->] (U) -- (V) node[anchor=east] {};
 \draw[->] (S) -- (U) node[anchor=east] {};
 \draw[->] (T) -- (V) node[anchor=east] {};

 \draw[->] (U) -- (0) node[anchor=east] {};
 \draw[->] (V) -- (0') node[anchor=east] {};
\end{tikzpicture} \\
which has exact rows. The two left vertical arrows of the commutative diagram above are isomorphisms, and it follows that the third vertical arrow is also an isomorphism. Therefore, $\Ext^1_{kG}(D(1,\l),D(s,\mu)) \cong \Ext^1_{kG/J_k}(D(1,\l),D(s,\mu))$. Since $\bar{F}(D(1,\l))=L^k(\l)$, it follows by \cite[Thm. 9.17]{kyoto} that $\Ext^1_{kG/J_k}(D(1,\l),D(s,\mu)) \cong \Ext^1_{S_q(n,n)_k}(L^k(\l), \bar{F}(D(s,\mu)))$.\footnote{The irreducible $S_q(n,n)_k$-module $L^k(\l)$ has the structure of a $\underset{s \in \C_{ss,r'}}{\oplus} \overset{m(s)}{\underset{i=1}{\otimes}} S_{q^{a_i(s)}}(n_i(s),n_i(s))_k$-module via the natural quotient map $\underset{s \in \C_{ss,r'}}{\oplus} \overset{m(s)}{\underset{i=1}{\otimes}} S_{q^{a_i(s)}}(n_i(s),n_i(s))_k \to S_q(n,n)_k$ (where $S_q(n,n)_k$ is the summand of $\underset{s \in \C_{ss,r'}}{\oplus} \overset{m(s)}{\underset{i=1}{\otimes}} S_{q^{a_i(s)}}(n_i(s),n_i(s))_k$ corresponding to $s=1$). So, for $s \neq 1$, any element of the tensor product $\overset{m(s)}{\underset{i=1}{\otimes}} S_{q^{a_i(s)}}(n_i(s),n_i(s))_k$ acts as the zero map on $L^k(\l)$, and it follows that $\Ext^1_{kG/J(q)_k}(D(1,\l),D(s,\mu)) \cong \Ext^1_{\underset{s \in \C_{ss,r'}}{\oplus} \overset{m(s)}{\underset{i=1}{\otimes}} S_{q^{a_i(s)}}(n_i(s),n_i(s))_k}(L^k(\l), \bar{F}(D(s,\mu))) \cong \Ext^1_{S_q(n,n)_k}(L^k(\l), \bar{F}(D(s,\mu))).$} So, 
\begin{equation}\label{bigextiso}
\Ext^1_{kG}(D(1,\l),D(s,\mu)) \cong \Ext^1_{S_q(n,n)_k}(L^k(\l), \bar{F}(D(s,\mu))).
\end{equation}
If $s \neq 1$, then $D(1,\l)$ and $D(s,\mu)$ belong to different blocks, which means that 
$$\Ext^1_{kG}(D(1,\l),D(s,\mu))=0.$$ 
If $s=1$, then $\mu$ is a partition of $n$, $\bar{F}(D(s,\mu))=\bar{F}(D(1,\mu))=L^k(\mu)$, and the isomorphism (\ref{bigextiso}) gives $\Ext^1_{kG}(D(1,\l),D(s,\mu)) \cong \Ext^1_{S_q(n,n)_k}(L^k(\l),L^k(\mu))$. \\
\end{proof}

As an application of Theorem \ref{general1}, we will show that there are no non-split self-extensions of irreducible $kG$-modules belonging to the unipotent principal series ${\rm{Irr}}_k(G|B)$. It is known that there are no non-split self-extensions of irreducible modules for the $q$-Schur algebra $S_q(n,n)_k$. \\

\begin{proposition}\label{general2}
If $\l$ is a partition of $n$ and $L^k(\l)$ is the corresponding irreducible $S_q(n,n)_k$-module, then $\Ext^1_{S_q(n,n)_k}(L^k(\l),L^k(\l))=0$. 
\end{proposition}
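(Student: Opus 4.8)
The claim is the standard fact that quasi-hereditary algebras (equivalently, highest weight categories over a field) have no non-split self-extensions of irreducibles, specialized to $S_q(n,n)_k$. The plan is to exploit the highest weight structure recorded in Section 2.2: for each $\l \vdash n$ we have the standard object $\Delta(\l)$ and costandard object $\nabla(\l)$ in $\text{mod-}S_q(n,n)_k$, with $L^k(\l) = \head(\Delta(\l)) = \soc(\nabla(\l))$, and the usual orthogonality and extension-vanishing properties: $\Hom(\Delta(\l),\nabla(\mu)) = 0$ unless $\l = \mu$ (in which case it is one-dimensional), $\Ext^1(\Delta(\l),\nabla(\mu)) = 0$ for all $\l,\mu$, and the composition factors $L^k(\mu)$ of $\rad\Delta(\l)$ satisfy $\mu \vartriangleright \l$ strictly in the dominance order, while those of $\nabla(\l)/\soc\nabla(\l)$ likewise satisfy $\mu \vartriangleright \l$.

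First I would fix $\l \vdash n$ and consider the short exact sequence $0 \to \rad\Delta(\l) \to \Delta(\l) \to L^k(\l) \to 0$. Applying $\Hom_{S_q(n,n)_k}(-,L^k(\l))$ gives an exact sequence
$$\Hom(\rad\Delta(\l),L^k(\l)) \to \Ext^1(L^k(\l),L^k(\l)) \to \Ext^1(\Delta(\l),L^k(\l)).$$
The left-hand term vanishes because every composition factor of $\rad\Delta(\l)$ is of the form $L^k(\mu)$ with $\mu \vartriangleright \l$, hence $\mu \neq \l$, so there are no nonzero homomorphisms to $L^k(\l)$. It therefore suffices to show $\Ext^1_{S_q(n,n)_k}(\Delta(\l),L^k(\l)) = 0$.

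For that, dually consider $0 \to L^k(\l) \to \nabla(\l) \to \nabla(\l)/L^k(\l) \to 0$ and apply $\Hom_{S_q(n,n)_k}(\Delta(\l),-)$:
$$\Hom(\Delta(\l),\nabla(\l)/L^k(\l)) \to \Ext^1(\Delta(\l),L^k(\l)) \to \Ext^1(\Delta(\l),\nabla(\l)).$$
The right-hand term is zero by the defining $\Ext$-vanishing between standard and costandard objects. The left-hand term is zero because every composition factor of $\nabla(\l)/L^k(\l)$ is $L^k(\mu)$ with $\mu \vartriangleright \l$, so any map from $\Delta(\l)$ (whose head is $L^k(\l)$) into it must be zero. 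Hence $\Ext^1(\Delta(\l),L^k(\l)) = 0$, and combining with the previous paragraph, $\Ext^1_{S_q(n,n)_k}(L^k(\l),L^k(\l)) = 0$.

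The main thing to be careful about is not a genuine obstacle but a bookkeeping point: one must invoke the correct direction of the dominance-order condition for radicals of standard objects versus cosocles of costandard objects, and make sure the $\Hom$-vanishing arguments use that every relevant composition factor has label strictly above $\l$ (in particular distinct from $\l$). All of the required structural facts — quasi-heredity of $S_q(n,n)_k$, the highest weight category structure with poset $\Lambda^+(n,n)$ under dominance, and the existence of $\Delta(\l)$, $\nabla(\l)$ — are exactly what is recorded in Section 2.2, so no new input is needed; this is an entirely formal consequence of the highest weight structure.
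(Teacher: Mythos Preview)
Your argument is correct and is precisely the standard highest weight category proof; the paper itself does not prove the proposition but simply cites \cite[Lem.~3.2(b)]{cps88}, whose proof is essentially the one you have written out. One cosmetic point: in the conventions of this paper the composition factors $L^k(\mu)$ of $\rad\Delta(\l)$ (and of $\nabla(\l)/L^k(\l)$) satisfy $\mu \vartriangleleft \l$ rather than $\mu \vartriangleright \l$, but as you yourself note, only $\mu \neq \l$ is used, so this does not affect the argument.
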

(See \cite[Lem. 3.2(b)]{cps88} for a proof of Proposition \ref{general2}.) \\

Combining the result of Theorem \ref{general1} with that of Proposition \ref{general2} yields the next corollary. \\

\begin{corollary}\label{18vanishing1}
If $\l$ is an $l$-restricted partition of $n$ (so that $D(1,\l) \in {\rm{Irr}}_k(G|B)$), then $\Ext^1_{kG}(D(1,\l),D(1,\l))=0$.
\end{corollary}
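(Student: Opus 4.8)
The plan is to combine the two preceding results directly. Corollary \ref{18vanishing1} asserts that $\Ext^1_{kG}(D(1,\l),D(1,\l)) = 0$ whenever $\l \vdash n$ is $l$-restricted. By Lemma \ref{indexcompare}, the condition that $\l$ is $l$-restricted is exactly the condition that $D(1,\l)$ lies in the unipotent principal series $\mathrm{Irr}_k(G|B)$, so this is the statement we want. The proof is essentially a two-line deduction.

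First I would invoke Theorem \ref{general1} with $s = 1$ and $\mu = \l$. Since $\l$ is $l$-restricted, the hypothesis of the theorem is met, and the $s=1$ branch of the conclusion gives
$$\Ext^1_{kG}(D(1,\l),D(1,\l)) \cong \Ext^1_{S_q(n,n)_k}(L^k(\l),L^k(\l)).$$
Then I would apply Proposition \ref{general2}, which says precisely that $\Ext^1_{S_q(n,n)_k}(L^k(\l),L^k(\l)) = 0$; this holds because $\mathrm{mod}\text{-}S_q(n,n)_k$ is a highest weight category and irreducible modules in such a category have no non-split self-extensions (indeed $\Ext^1$ between an irreducible $L(\l)$ and itself vanishes since any self-extension would have both composition factors equal to $L(\l)$, contradicting the strict triangularity of the standard filtration). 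Chaining the isomorphism with the vanishing yields $\Ext^1_{kG}(D(1,\l),D(1,\l)) = 0$.

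There is really no obstacle here: the corollary is a formal consequence of Theorem \ref{general1} and Proposition \ref{general2}, both already established in the excerpt. The only point worth a sentence of commentary is the translation of hypotheses — noting (via Lemma \ref{indexcompare}) that ``$D(1,\l) \in \mathrm{Irr}_k(G|B)$'' and ``$\l$ is $l$-restricted'' are interchangeable, so that the corollary as stated (phrased in terms of $l$-restricted partitions, with the parenthetical remark about the unipotent principal series) is exactly what the combination of the two results delivers. I would also remark, for context, that this gives a much sharper bound than the general $\dim \Ext^1_{kG}(Y,Y) \le n!$ available from the Guralnick–Tiep type estimates mentioned in the introduction.
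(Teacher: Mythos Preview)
Your proposal is correct and matches the paper's approach exactly: the paper states that the corollary follows by combining Theorem \ref{general1} (applied with $s=1$, $\mu=\lambda$) with Proposition \ref{general2}, which is precisely the two-step deduction you give.
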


\section{Calculations of Higher $\Ext$ Groups  for $\GL_n(q)$ in Cross Characteristic}

As above, let $G=\GL_n(q)$, and let $k$ be an algebraically closed field of characteristic $r>0$, $r \centernot\mid q(q-1)$. Let $(\O,K,k)$ be an $r$-modular system, where the quotient field $K$ of $\O$ is large enough so that it is a splitting field for $G$. In this section, we will generalize the result of \cite[Thm. 12.4]{kyoto}, which states that if $r \centernot\mid q \overset{m+1}{\underset{j=1}{\prod}} ~(q^j-1)$ for some integer $m \geq 0$ and $V$ is a right $kG$-module with $J_k \subseteq \text{Ann}_{kG}(V)$, 
$$H^i(G,V) \cong \Ext^i_{S_q(n,n)_k}(L^k((1^n)), \bar{F}(V))$$
for $0 \leq i \leq m+1$. \\

Let $\mathfrak{S}_n$ denote the symmetric group on $n$ letters, and let $S$ be the generating set of fundamental reflections in $\mathfrak{S}_n$. Let $\widetilde{H}=\widetilde{H}(\mathfrak{S}_{n},S)$ be the generic Hecke algebra over the Laurent polynomial ring $\mathcal{Z}=\mathbb{Z}[t,t^{-1}]$ corresponding to the pair $(\mathfrak{S}_n,S)$ (defined in Section 2.1). Let $\widetilde{H}_\O$ denote the $\O$-algebra obtained by base change to $\O$, and let $\widetilde{H}_k$ denote the $k$-algebra obtained by base change to $k$. As in \cite{ps}, we will denote $\widetilde{H}_k$ by $H$. For consistency, we will also denote the $q$-Schur algebra $S_q(n,n)_k$ by $S_q(n,n)$ for the remainder of this paper. \\

By \cite[4.3.1]{gj}, $H \cong \End_{kG}(k_B^G)$. Therefore, there is a ``Hecke functor" $$\mathfrak{F}_k: =\Hom_{kG}(k|_B^G,-): \text{mod-}kG \to \text{mod-}H.$$ The functor $\F_k$ has a right inverse $\mathfrak{G}_k: \text{mod-}H \to \text{mod-}kG$, which is given by $\mathfrak{G}_k(E)=E\otimes_H k|_B^G$ for any right $H$-module $E$. (The right $kG$-module $k|_B^G$ is naturally a left module for the endomorphism algebra $H=\End_{kG}(k|_B^G)$.) We have adopted the notation of Geck and Jacon \cite{gj} here. Since $k|_B^G$ is projective, $\mathfrak{F}_k=H_1$ and $\mathfrak{G}_k=\hat{H_1}$ in Dipper and Du's notation \cite[Sec. 4.1]{dd}. \\

\begin{remark}\label{gjassumptions}
By \cite[4.1.4]{gj}, we can say more about the relationship between the functors $\F_k$ and $\mathfrak{G}_k$ under the current assumptions on the characteristic $r$ of $k$. Since $r \centernot\mid q$ and $k|_B^G$ is projective, $\mathfrak{G}_k$ is a two-sided inverse of of $\F_k$ on the full subcategory of $\text{mod-}kG$ consisting of all $V \in \text{mod-}kG$ such that 

\begin{enumerate}
\item
every non-zero submodule of $V$ has a composition factor in ${\rm{Irr}}_k(G|B)$, and

\item
every non-zero quotient module of $V$ has a composition factor in ${\rm{Irr}}_k(G|B)$
\end{enumerate}
(As stated in \cite[4.1.4]{gj}, this result is based on work of Green \cite{green} and Brundan, Dipper, and Kleschev \cite{bdk}.) \\
\end{remark}

\begin{lemma}\label{annihilated}
If $V$ is a $kG$-module in the image of the functor $\mathfrak{G}_k: \text{mod-}H \to \text{mod-}kG$, then $V$ is annihilated by $J_k$.
\end{lemma}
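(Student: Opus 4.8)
The plan is to reduce the statement to a fact we already have control of: that $k|_B^G$ is annihilated by $J_k$, and that $J_k$ is a two-sided ideal. First I would recall the setup. By definition, every $V$ in the image of $\mathfrak{G}_k$ has the form $V = E \otimes_H k|_B^G$ for some right $H$-module $E$, where $H = \End_{kG}(k|_B^G)$ acts on $k|_B^G$ on the left. The key structural input is that $k|_B^G$ is a direct summand of $\widehat{M}_{1,G,\O} \otimes_\O k = \widehat{M}_{1,G,k}$ (by \cite[Rem. 9.18(b)]{kyoto}), hence a fortiori $k|_B^G$ is a direct summand of $\underset{s \in \C_{ss,r'}}{\oplus} \widehat{M}_{s,G,k}$. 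Since $J_k$ is defined as (the image mod $r$ of) $J = \underset{s}{\sum} J_s$ with $J_s = \Ann_{B_{s,G}}(\widehat{M}_{s,G,\O})$, and $J_k$ annihilates $\underset{s}{\oplus}\widehat{M}_{s,G,k}$ by construction, it annihilates the direct summand $k|_B^G$ as well. So $J_k \subseteq \Ann_{kG}(k|_B^G)$.

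Next I would promote this from $k|_B^G$ to $V = E \otimes_H k|_B^G$. The point is purely formal: for $x \in k|_B^G$, $e \in E$, and $j \in J_k$, one has $(e \otimes x)\cdot j = e \otimes (x j)$ (the $kG$-action on the tensor product is through the right factor $k|_B^G$), and $xj = 0$ since $J_k$ annihilates $k|_B^G$. Therefore $V \cdot J_k = 0$, i.e. $J_k \subseteq \Ann_{kG}(V)$. This is essentially immediate once the annihilation of $k|_B^G$ is in hand. (One should be a little careful that $J_k$ acts on $E \otimes_H k|_B^G$ via the right $kG$-module structure on $k|_B^G$ and that this is compatible with the tensor product over $H$; but $H$ acts on the left of $k|_B^G$ and $kG$ on the right, and these commute, so the right $kG$-action descends to the tensor product and is given by acting on the $k|_B^G$ factor — no obstruction here.)

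The only genuine point requiring care — and the one I would spell out — is the claim that $J_k$ annihilates $k|_B^G$, i.e. making precise the passage from $\widehat{M}_{s,G,\O}$ over $\O$ to $k$, and the reduction to the $s=1$ summand. Since $k|_B^G$ is a summand of $\widehat{M}_{1,G,k}$ and $\widehat{M}_{1,G,k}$ lies in the block sum $B_{1,G} \otimes_\O k$, the ideals $J_s$ for $s \neq 1$ (living in $B_{s,G}$) act as zero on it automatically, so only $J_{1,k} := J_1 \otimes_\O k$ is at issue, and $J_1 = \Ann_{B_{1,G}}(\widehat{M}_{1,G,\O})$ annihilates $\widehat{M}_{1,G,\O}$ hence $\widehat{M}_{1,G,\O} \otimes_\O k = \widehat{M}_{1,G,k}$ hence its summand $k|_B^G$. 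I expect this block/base-change bookkeeping to be the main (mild) obstacle; everything else is formal nonsense about tensor products. Once assembled, the lemma follows.
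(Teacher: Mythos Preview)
Your proposal is correct and follows essentially the same route as the paper: both use that $k|_B^G$ is a direct summand of the $kG/J_k$-module $\widehat{M}_{1,G,k}$ (via \cite[Rem.~9.18(b)]{kyoto}), hence is annihilated by $J_k$, and then observe that the $kG$-action on $E \otimes_H k|_B^G$ is through the right factor. The paper's version is terser---it simply notes that $\widehat{M}_{1,G,k}$ is already a $kG/J_k$-module, bypassing your explicit block and base-change discussion---but the argument is the same.
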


\begin{proof}
Let $E$ be a right $H$-module such that $V=\mathfrak{G}_k(E)=E \otimes_{kG} k|_B^G$. Given $\alpha \in kG$ and $e \otimes x \in E \otimes_{kG} k|_B^G$ (where $e \in E$ and $x \in k|_B^G$), we have $(e \otimes x).\alpha = e \otimes (x.\alpha)$. But, since $k|_B^G$ is a direct summand of the right $kG/J_k$-module $\widehat{M}_{1,G,k}$ \cite[Rem. 9.18 (b)]{kyoto}, $k|_B^G$ is annihilated by $J_k$. Thus, $e \otimes (x.\alpha)=e \otimes 0 = 0$. It follows that $E \otimes_{kG} k|_B^G$ is annihilated by $J_k$, which means that the same statement holds for $V$. \\
\end{proof}

Before we proceed, we must define a certain $S_q(n,n)$-$H$ bimodule which is used in \cite{ps} to link the representation theory of $H$ to the representation theory of $S_q(n,n)$. As in Section 2.2, let $V$ be a free $\mathcal{Z}=\mathbb{Z}[t,t^{-1}]$-module of rank $n$. Given $\lambda \vdash n$, let $k_\lambda \in \mathbb{Z}$ be such that  $V^{\otimes n} \cong \underset{\l \in \Lambda^+(n)}{\oplus} (x_\l \widetilde{H})^{k_\l}$ (see (\ref{klambda})). We define a right $\widetilde{H}$-module
$$\widetilde{T}=\underset{\l \in \Lambda^+(n)}{\oplus} (x_\l \widetilde{H})^{k_\l}.$$ 
Since $S_t(n,n)=\End_{\widetilde{H}}(V^{\otimes n}) \cong \End_{\widetilde{H}}(\widetilde{T})$, there is also a natural left action of $S_t(n,n)$ on $\widetilde{T}$. Let $T=\widetilde{T}_k$ be the $S_q(n,n)$-$H$ bimodule obtained by reduction of $\widetilde{T}$ to $k$. \\

Since $k|_B^G$ is a right $kG$-module, $k|_B^G$ is a left module for $H=\End_{kG}(k|_B^G)$, which means that the dual $(k|_B^G)^*$ has the structure of a right $H$-module. (The module $k|_B^G$ is self-dual; however, we will continue to write $(k|_B^G)^*$ for the right $H$-module described here to distinguish it from $k|_B^G$ as a right $kG$-module.) \\

\begin{lemma}\label{Iso}
Suppose that $r \centernot\mid q(q-1)$, and let $\widehat{M}_{1,G,k} \otimes_{kG} (k|_B^G)^*$ be the right $H$-module defined via the right action of $H$ on $(k|_B^G)^*$.\footnote{See (\ref{msgo}) of Section 3.1.2 for the definition of $\widehat{M}_{1,G,k}$.} There is an isomorphism $\widehat{M}_{1,G,k} \otimes_{kG} (k|_B^G)^* \cong T^\Phi$ of right $H$-modules (where $T^\Phi$ denotes the right $H$-module obtained by twisting the action of $H$ on $T$ by the involution $\Phi$ of $H$ defined in Section 2.1).
\end{lemma}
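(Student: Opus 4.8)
\textbf{Proof plan for Lemma \ref{Iso}.}

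The plan is to identify both $H$-modules with a more computable object and match them. First I would unwind the left-hand side: since $\widehat{M}_{1,G,k}$ is Harish-Chandra induced from a module whose endomorphism algebra is the $q$-Schur algebra, and $k|_B^G$ is a direct summand of $\widehat{M}_{1,G,k}$ by \cite[Rem. 9.18(b)]{kyoto}, the functor $-\otimes_{kG}(k|_B^G)^*$ applied to $\widehat{M}_{1,G,k}$ should be re-expressed using $\Hom$-$\otimes$ adjunction. Concretely, since $k|_B^G$ is projective, $(k|_B^G)^* \cong \Hom_{kG}(k|_B^G, kG)$ as a right $H$-module (here $kG$ carries its regular right action and $H$ acts through the left $H$-structure on the first argument), so $\widehat{M}_{1,G,k} \otimes_{kG} (k|_B^G)^* \cong \widehat{M}_{1,G,k} \otimes_{kG} \Hom_{kG}(k|_B^G,kG) \cong \Hom_{kG}(k|_B^G, \widehat{M}_{1,G,k})$, the last isomorphism because $k|_B^G$ is finitely generated projective. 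Thus the left-hand side is $\Hom_{kG}(k|_B^G, \widehat{M}_{1,G,k}) = \bar{F}(\widehat{M}_{1,G,k})$ — essentially the image of $\widehat{M}_{1,G,k}$ under the CPS functor (restricted appropriately), now viewed as a right $H$-module via the left $H$-action on $k|_B^G$.

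Next I would bring in the work of \cite{kyoto} and \cite{ps} that computes exactly this $\Hom$-space. The point is that $\Hom_{kG}(k|_B^G, \widehat{M}_{1,G,k})$, with its right $H$-module structure, is already identified in \cite[Sec. 8--9]{kyoto} with (the reduction mod $r$ of) the tilting-type module $\widetilde{T} = \bigoplus_{\l \in \Lambda^+(n)} (x_\l\widetilde{H})^{k_\l}$ — up to the twist by $\Phi$. Indeed, by \cite[(4.3.1)]{gj} we have $H \cong \End_{kG}(k|_B^G)$, and the module $\widehat{M}_{1,G,k}$ is (a Morita-theoretic model for) $\bigoplus_\l y_{(1,\l)}\,(\text{something})$; the elements $x_\l$ and $y_\l$ are interchanged by $\Phi$ on $H$, which is precisely the source of the $\Phi$-twist in the statement. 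The cleanest route is: identify $\Hom_{kG}(k|_B^G,\widehat{M}_{1,G,k})$ with the integral object $\widehat{M}_{1,G,\O}\otimes_{\O G}(k|_B^G \text{ lifted})^*$ via \cite[Thm. 9.17]{kyoto} (the Morita equivalence survives base change), reduce the question to the generic level where one compares $\widetilde{T}$ with $\bigoplus_\l y_\l\widetilde{H}$-type data, and invoke the $\Phi$-duality $(x_\l\widetilde{H})^\Phi \cong y_\l\widetilde{H}$ from \cite[Lem. 1.1]{dps} together with the $D_{\widetilde{H}}$-self-duality of $\widetilde{T}$ recorded in Section 2.1.

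The main obstacle, I expect, is \emph{bookkeeping the module structures and the twist} rather than any single hard idea: there are at least three different $H$-actions in play (the left action on $k|_B^G$, the right action on $(k|_B^G)^*$, and the $\Phi$-twisted action on $T$), and one must check that the adjunction isomorphisms are $H$-equivariant for the right actions, and that the $\Phi$ appears because passing from $k|_B^G$ (built from $x$-type idempotents, i.e. trivial-character Hecke induction) to $\widehat{M}_{1,G,k}$ (built from $y$-type idempotents, i.e. sign-character induction, after squaring off the torsion via $\sqrt{\ }$) is exactly the operation $\Phi$ implements on $H$. I would handle this by working integrally over $\O$ throughout — where $\widetilde{T}$ is $\z$-free so $D_{\widetilde{H}}^2 = \mathrm{id}$ — establishing the bimodule isomorphism $\widehat{M}_{1,G,\O}\otimes_{\O G}(k|_B^G)^* \cong \widetilde{T}^\Phi_{\O}$ there using \cite[Lem. 9.11, Cor. 9.14]{kyoto} to match endomorphism algebras, and then base-changing to $k$. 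The reduction step is harmless because $k|_B^G$ is projective and $\widehat{M}_{1,G,\O}$ is an $\O$-lattice, so $\otimes_\O k$ commutes with the relevant $\Hom$ and $\otimes$ functors.
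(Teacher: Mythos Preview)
Your overall architecture matches the paper's: reduce $\widehat{M}_{1,G,k}\otimes_{kG}(k|_B^G)^*$ to $\Hom_{kG}(k|_B^G,\widehat{M}_{1,G,k})$ using projectivity of $k|_B^G$, lift to $\O$, compute there, and identify the answer with $\widetilde{T}^\Phi$ via $(x_\l\widetilde{H})^\Phi\cong y_\l\widetilde{H}$ from \cite[Lem.~1.1]{dps}. That part is correct and is exactly what the paper does.

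The gap is in the middle step. After decomposing $\widehat{M}_{1,G,\O}=\bigoplus_{\l\vdash n}\sqrt{y_\l\,\O|_B^G}^{\,k_\l}$, one needs the right $H_\O$-module isomorphism
\[
\Hom_{\O G}\bigl(\O|_B^G,\ \sqrt{y_\l\,\O|_B^G}\bigr)\ \cong\ y_\l H_\O .
\]
You gesture at this (``compare $\widetilde{T}$ with $\bigoplus_\l y_\l\widetilde{H}$-type data'') but the references you cite for it, \cite[Lem.~9.11, Cor.~9.14]{kyoto}, do not provide it: those results compute the \emph{endomorphism algebra} $\End_{\O G}(\widehat{M}_{1,G,\O})\cong S_q(n,n)_\O$, which says nothing about $\Hom_{\O G}(\O|_B^G,\widehat{M}_{1,G,\O})$ as a right $H_\O$-module. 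Two $(S_q(n,n),H)$-bimodules with the same left endomorphism algebra need not be isomorphic, and in particular this does not distinguish $T$ from $T^\Phi$. The paper supplies this step via explicit Dipper--James computations: \cite[Lem.~2.22]{dj} identifies $\sqrt{y_\l\,\O|_B^G}$ with a certain annihilator submodule $M_{y_\l}$, \cite[Lem.~2.15(ii)]{dj} identifies $\Hom_{\O G}(\O|_B^G,M_{y_\l})$ with a double annihilator $rl(y_\l H_\O)$ inside $H_\O$, and \cite[Lem.~2.8]{dj} shows $rl(y_\l H_\O)=y_\l H_\O$. Without this (or an equivalent explicit argument) your plan does not close; once you insert it, the rest of your outline is the paper's proof.
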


\begin{proof}
Under the given assumptions on $r$, $k|_B^G$ is projective. Thus, there is an isomorphism of right $H$-modules $$\widehat{M}_{1,G,k} \otimes_{kG} (k|_B^G)^* \cong \Hom_{kG}(k|_B^G, \widehat{M}_{1,G,k})$$ \cite[Rem. 2.3]{d1}, with the right action of $H$ on $\Hom_{kG}(k|_B^G, \widehat{M}_{1,G,k})$ induced by the left action of $H$ on $k|_B^G$. We also have an isomorphism $$\Hom_{kG}(k|_B^G, \widehat{M}_{1,G,k}) \cong \Hom_{\O G}(\O|_B^G, \widehat{M}_{1,G,\O})_k$$ since $k|_B^G$ is projective. By definition, $\widehat{M}_{1,G,\O}=\underset{\lambda \vdash n}{\oplus} \sqrt{y_\lambda \O|_B^G}^{~k_\l}$ (see Section 3.1.2); so, 
$$\Hom_{\O G}(\O|_B^G, \widehat{M}_{1,G,\O})_k \cong \underset{\lambda \vdash n}{\oplus} \Hom_{\O G}\Big(\O|_B^G, \sqrt{y_\lambda \O|_B^G}\Big)^{k_\l}_k.$$ 
By \cite[Lem. 2.22, 2.15 (ii), and 2.8]{dj}, $\Hom_{\O G}(\O|_B^G, \sqrt{y_\lambda \O|_B^G}) \cong y_\lambda H_\O$.\footnote{In the notation of Dipper and James, $\O|_B^G=M$ and the lemmas of \cite{dj} apply as follows. Let $\l \vdash n$. By \cite[Lem. 2.22]{dj}, $\sqrt{y_\lambda \O|_B^G} =\sqrt{y_\lambda M} \cong M_{y_\lambda} =\{ u \in M ~|~ l(y_\lambda H_\O)u=0\}$, where $l(y_\lambda H_\O)$ is the left annihilator of $y_\lambda H_\O$ in $H_\O$. By \cite[Lem. 2.15 (ii)]{dj}, $\Hom_{\O G}(\O|_B^G,M_{y_\lambda})=rl(y_\lambda H_\O)$, where $rl(y_\lambda H_\O)$ denotes the right annihilator of $l(y_\lambda H_\O)$ in $H_\O$. Finally, by \cite[Lem. 2.8]{dj},  $rl(y_\lambda H_\O)=y_\lambda H_\O$. Combining these results, we see that $\Hom_{\O G}(\O_B^G, \sqrt{y_\l \O |_B^G}) \cong y_\lambda H_\O$.} Thus,
$$\Hom_{kG}(k|_B^G, \widehat{M}_{1,G,k}) \cong  \Hom_{\O G}(\O|_B^G, \widehat{M}_{1,G,\O})_k \cong \underset{\lambda \vdash n}{\oplus} (y_\lambda H_\O)^{k_\lambda}_k \cong (\underset{\lambda \vdash n}{\oplus} (y_\lambda H_\O)^{k_\lambda})_k \cong \widetilde{T}^\Phi_k \cong T^{\Phi},$$
where the isomorphism $\underset{\lambda \vdash n}{\oplus} (y_\lambda H_\O)^{k_\l} \cong \widetilde{T}^\Phi$ holds by \cite[Lem. 1.1 (c)]{dps}. \\
\end{proof}

Before we proceed to the next result, we must compare the indexing of Specht modules for the Hecke algebra used by Du, Parshall, and Scott \cite{dps} and Parshall and Scott \cite{ps} with that used by Dipper and James \cite{dj}. In the work of DPS and PS, the Specht module in $\text{mod-}\widetilde{H}_\O$ corresponding to a partition $\l \vdash n$ is denoted by $\widetilde{S}_\l$. First, we will define the Specht module $\widetilde{S}_\l$ following Parshall and Scott \cite[Sec. 2.2]{ps}. If $\l$ is a partition of $n$, then $\Hom_{\widetilde{H}_\O}(y_{\l'}\widetilde{H}_\O, x_\l \widetilde{H}_\O) \cong \O$. Therefore, there exist indecomposable summands $\widetilde{Y}_\l^\natural$ of $y_{\l'}\widetilde{H}_\O$ and $\widetilde{Y}_\l$ of $x_\l \widetilde{H}_\O$ such that $\Hom_{\widetilde{H}_\O}(\widetilde{Y}_\l^\natural, \widetilde{Y}_\l) \cong \O$. (For $\l \vdash n$, $\widetilde{Y}_\l^\natural$ is called a twisted Young module, and $\widetilde{Y}_\l$ is called a Young module.) Let $\zeta_\l: \widetilde{Y}_\l^\natural \to \widetilde{Y}_\l$ be a generator of $\Hom_{\widetilde{H}_\O}(\widetilde{Y}_\l^\natural, \widetilde{Y}_\l)$. Then, the Specht module $\tilde{S}_\l$ corresponding to $\l$ is defined to be the image of $\zeta_\l$. \\

In the work of Dipper and James \cite{dj2}, \cite{dj} (and, in the work of Dipper and Du \cite{dd}), the Specht module corresponding to the partition $\l \vdash n$ is denoted by $\widetilde{S}^{\l}$ and defined as the right ideal $\widetilde{S}^{\l}=x_\l \tau_{w_{0 \l}} y_{\l'} \widetilde{H}_\O$ of $\widetilde{H}_\O$, where $w_{0 \l}$ is the longest element of the parabolic subgroup $W_\l$ of $W$ (see \cite[Sec. 4]{dj2}). \\

\begin{lemma}\label{specht}
The DPS labeling of the Specht modules for $H_\O$ is consistent with the DJ labeling. That is, for any $\l \vdash n$, $\widetilde{S}_\l \cong \widetilde{S}^{\l}$ as right $\widetilde{H}_\O$-modules.
\end{lemma}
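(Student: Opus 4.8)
The plan is to show that the two definitions of the Specht module $\widetilde{S}_\l$ coincide by identifying both as the image of the \emph{same} canonical map, up to the unit ambiguity coming from $\Hom_{\widetilde H_\O}(y_{\l'}\widetilde H_\O, x_\l\widetilde H_\O)\cong\O$. The DJ module is $\widetilde S^\l = x_\l\tau_{w_{0\l}}y_{\l'}\widetilde H_\O$, which is visibly the image of the right $\widetilde H_\O$-module homomorphism $\theta_\l\colon y_{\l'}\widetilde H_\O \to x_\l\widetilde H_\O$ given by left multiplication by $x_\l\tau_{w_{0\l}}$; here one checks $\theta_\l$ is well defined, i.e.\ that $x_\l\tau_{w_{0\l}}y_{\l'}\in x_\l\widetilde H_\O$ (trivially) and that it depends only on its argument (automatic for a left-multiplication map). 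First I would verify that $\theta_\l$ is nonzero and hence a generator of the rank-one free $\O$-module $\Hom_{\widetilde H_\O}(y_{\l'}\widetilde H_\O, x_\l\widetilde H_\O)$; nonvanishing follows because upon specializing $t=1$ this recovers the classical nonzero map $y_{\l'}\Z\S_n \to x_\l\Z\S_n$ underlying the ordinary Specht module, or alternatively because $\widetilde S^\l$ is known to be a free $\O$-module of the correct (nonzero) rank $\dim S^\l$.

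Next I would connect this to the DPS/PS construction. By \cite[Sec.\ 2.2]{ps}, one fixes indecomposable (twisted Young resp.\ Young) summands $\widetilde Y_\l^\natural \mid y_{\l'}\widetilde H_\O$ and $\widetilde Y_\l \mid x_\l\widetilde H_\O$ with $\Hom_{\widetilde H_\O}(\widetilde Y_\l^\natural,\widetilde Y_\l)\cong\O$, takes a generator $\zeta_\l$, and sets $\widetilde S_\l=\im\zeta_\l$. The key point is that the inclusion $\Hom_{\widetilde H_\O}(\widetilde Y_\l^\natural,\widetilde Y_\l)\hookrightarrow \Hom_{\widetilde H_\O}(y_{\l'}\widetilde H_\O, x_\l\widetilde H_\O)$ (induced by the splittings) is an isomorphism of rank-one $\O$-modules: this is precisely the content of the Young module multiplicity-one statement, namely that $\widetilde Y_\l$ (resp.\ $\widetilde Y_\l^\natural$) occurs with multiplicity one in $x_\l\widetilde H_\O$ (resp.\ $y_{\l'}\widetilde H_\O$) and no other common indecomposable summand contributes a nonzero hom. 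Granting this, $\zeta_\l$ extends (via composing with the split projection $y_{\l'}\widetilde H_\O\twoheadrightarrow\widetilde Y_\l^\natural$ and the split inclusion $\widetilde Y_\l\hookrightarrow x_\l\widetilde H_\O$) to a generator of $\Hom_{\widetilde H_\O}(y_{\l'}\widetilde H_\O, x_\l\widetilde H_\O)$, and this extension has the same image as $\zeta_\l$ because the projection is surjective and the inclusion is injective. Therefore $\widetilde S_\l = \im(\text{generator of }\Hom_{\widetilde H_\O}(y_{\l'}\widetilde H_\O, x_\l\widetilde H_\O))$, and since any two generators differ by a unit of $\O$, they have equal images.

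Combining the two paragraphs: $\theta_\l$ is a generator of $\Hom_{\widetilde H_\O}(y_{\l'}\widetilde H_\O, x_\l\widetilde H_\O)$ with $\im\theta_\l = \widetilde S^\l$, and $\widetilde S_\l$ is the image of some generator of the same $\Hom$-group; since all generators differ by an $\O$-unit and units act by isomorphisms on images, $\widetilde S_\l\cong\widetilde S^\l$ as right $\widetilde H_\O$-modules. (In fact one gets equality of submodules of $x_\l\widetilde H_\O$, not merely isomorphism.) I expect the main obstacle to be the bookkeeping in the middle step: carefully justifying that $\Hom_{\widetilde H_\O}(\widetilde Y_\l^\natural,\widetilde Y_\l)\to\Hom_{\widetilde H_\O}(y_{\l'}\widetilde H_\O,x_\l\widetilde H_\O)$ is an isomorphism rather than a proper inclusion. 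This rests on the structure theory of Young modules for Hecke algebras (the relevant decomposition multiplicities, e.g.\ as in \cite{dj} or the references of \cite{dps}); once that is in hand, the rest is formal. One should also double-check the convention matchup on which partition ($\l$ vs.\ $\l'$) decorates $x$ and $y$ in each source, since a transpose discrepancy there would be fatal — but both \cite{ps} and \cite{dj2} pair $x_\l$ with $y_{\l'}$, so the conventions do align.
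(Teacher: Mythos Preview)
Your route differs from the paper's, which is a two-line citation argument: both $\widetilde{S}_\l$ and $\widetilde{S}^\l$ are identified with the common submodule $x_\l\widetilde{H}_\O y_{\l'}\widetilde{H}_\O$, the first via \cite[Lem.~1.1(hf),(f)]{dps} and the second via \cite[Cor.~4.2]{dj2}. Your argument through the rank-one $\Hom$-space is more self-contained and essentially correct, but one step needs repair.

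The inference ``nonzero and hence a generator'' is invalid: a rank-one free $\O$-module has many nonzero non-generators when $\O$ is not a field, and neither of your proposed justifications distinguishes generators from other nonzero elements. (Indeed, over $\widetilde{H}_\O$ the indeterminate $t$ has already been sent to $q$, so the $t=1$ specialization is not available; and since images here are $\O$-torsion-free, \emph{every} nonzero element of $\Hom$ has image of the same $\O$-rank.) The same issue recurs in your middle step: an injection of rank-one free $\O$-modules need not be surjective, so you have not shown that $\Hom_{\widetilde{H}_\O}(\widetilde{Y}_\l^\natural,\widetilde{Y}_\l)\to\Hom_{\widetilde{H}_\O}(y_{\l'}\widetilde{H}_\O,x_\l\widetilde{H}_\O)$ is an isomorphism.

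Fortunately you do not need ``generator'' anywhere; ``nonzero'' suffices. Since $\O$ is a domain and any image sits inside an $\O$-free module (hence is $\O$-torsion-free), multiplication by a nonzero $c\in\O$ is an $\widetilde{H}_\O$-module isomorphism from $\im\zeta$ onto $\im(c\zeta)=c\cdot\im\zeta$. Thus any two nonzero elements of $\Hom_{\widetilde{H}_\O}(y_{\l'}\widetilde{H}_\O,x_\l\widetilde{H}_\O)\cong\O$ have isomorphic images, and your argument goes through once ``generator'' is replaced by ``nonzero element'' throughout. You then obtain $\widetilde{S}_\l\cong\widetilde{S}^\l$ as right $\widetilde{H}_\O$-modules, though not literal equality of submodules of $x_\l\widetilde{H}_\O$ as you parenthetically claim.
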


\begin{proof}
This result holds since both $\widetilde{S}_\l$ and $\widetilde{S}^{\l}$ are isomorphic to $x_\l \widetilde{H}_\O y_{\l'} \widetilde{H}_\O$. The isomorphism $\widetilde{S}_\l \cong x_\l \widetilde{H}_\O y_{\l'} \widetilde{H}_\O$ follows by  \cite[Lem. 1.1 (hf) and (f)]{dps}, and the isomorphism $\widetilde{S}^\l = x_\l \widetilde{H}_\O y_{\l'} \widetilde{H}_\O$ follows by \cite[Cor. 4.2]{dj2}. \\
\end{proof}

The PS indexing of the irreducible $H$-modules is also consistent with the DJ indexing. Let $S_\l=\widetilde{S}_{\l k}$ denote the Specht module for $H$ obtained by base change to $k$. As above, let
$$l=
\begin{cases}
r &\text{ if } |q \pmod{r}|=1 \\
|q \pmod{r}| &\text{ if } |q \pmod{r}|>1.
\end{cases}
$$
When $\l$ is an $l$-regular partition of $n$, $D_\l := S_\l/\rad(S_\l)$ is an irreducible $H$-module. A full set of irreducible $H$-modules is given by $\{ D_\l ~|~\l \text{ is } l-\text{regular} \}$. In the work of Dipper and James, the irreducible $H$-module corresponding to an $l$-regular partition $\l$ of $n$ is denoted by $D^{\l}$. By Lemma \ref{specht}, we have $D^{\l} \cong D_\l$ for all $l$-regular partitions $\l$ of $n$. \\

Suppose that $\l$ is an $l$-regular partition of $n$. Dipper and James \cite[(3.1)]{dj} associate to $\l$ a certain indecomposable right $kG$-module $S(1,\l)$ with the property that $\head(S(1,\l))=D'(1,\l)$ (where $D'(1,\l)$ is indexed following Dipper and Du \cite[(4.2.3)]{dd}). The $kG$-module $S(1,\l)$ is closely connected to the Specht module $S_\l$ for the Hecke algebra (this is the reason for the similarity in notation); the Hecke functor $\text{mod-}kG \to \text{mod-}H$ maps $S(1,\l)$ to $S_\l$ \cite[(3.1)]{dj}. By \cite[Def. 11.11]{james}, $S(1,\l)$ is a submodule of the permutation module $k|_{P_\lambda}^G$, where $P_\lambda$ is the parabolic subgroup of $G$ corresponding to $\lambda$. It follows that $S(1,\l)$ is also a submodule of $k|_B^G$ (since $k \subseteq k|_B^{P_\lambda}$ and induction from $P_\lambda$ to $G$ is exact, we have $k|_{P_\lambda}^G \subseteq k|_B^{P_\lambda} |_{P_\lambda}^G \cong k|_B^G$). Since every irreducible $kG$-module in the socle of $k|_B^G$ belongs to the unipotent principal series ${\rm{Irr}}_k(G|B)$, the same is true of $S(1,\l)$. When $\l$ is $l$-regular, $\head(S(1,\l))=D'(1,\l)$ also belongs to ${\rm{Irr}}_k(G|B)$. These observations justify the next result. \\

\begin{lemma}\label{spechtremark}
Let $\l$ be an $l$-regular partition of $n$. Then, the $kG$-module $S(1,\l)$ satisfies the assumptions of Remark \ref{gjassumptions} (i.e., every non-zero submodule and quotient module of $S(1,\l)$ has a composition factor in ${\rm{Irr}}_k(G|B)$). In particular, $S(1,\l) \cong \mathfrak{G}_k (\mathfrak{F}_k (S(1,\l)))$. \\
\end{lemma}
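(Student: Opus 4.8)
The plan is to verify the two conditions of Remark~\ref{gjassumptions} directly for $S(1,\l)$, using the submodule/quotient structure of $S(1,\l)$ relative to the permutation module $k|_B^G$, and then to invoke Remark~\ref{gjassumptions} to conclude $S(1,\l) \cong \mathfrak{G}_k(\mathfrak{F}_k(S(1,\l)))$. First I would record the structural facts established in the paragraph preceding the lemma: $S(1,\l)$ is a submodule of $k|_B^G$, every composition factor in the socle of $k|_B^G$ lies in ${\rm{Irr}}_k(G|B)$ (indeed $\soc(k|_B^G)$ is semisimple with all constituents in the unipotent principal series), and $\head(S(1,\l)) = D'(1,\l) \in {\rm{Irr}}_k(G|B)$ since $\l$ is $l$-regular.

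For condition~(1), I would argue as follows. Let $0 \neq W \subseteq S(1,\l)$ be a nonzero submodule. Then $W$ is also a nonzero submodule of $k|_B^G$, so $\soc(W) \neq 0$ and $\soc(W) \subseteq \soc(k|_B^G)$. Hence every irreducible summand of $\soc(W)$ belongs to ${\rm{Irr}}_k(G|B)$, and in particular $W$ has a composition factor in ${\rm{Irr}}_k(G|B)$. For condition~(2), let $Q$ be a nonzero quotient of $S(1,\l)$, say $Q = S(1,\l)/W'$ for a proper submodule $W' \subsetneq S(1,\l)$. Since $S(1,\l)$ is indecomposable with simple head $D'(1,\l)$, every proper submodule $W'$ is contained in $\rad S(1,\l)$; thus $Q$ surjects onto $S(1,\l)/\rad S(1,\l) = D'(1,\l)$, so $D'(1,\l) \in {\rm{Irr}}_k(G|B)$ is a composition factor of $Q$. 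This establishes both conditions, and the final isomorphism $S(1,\l) \cong \mathfrak{G}_k(\mathfrak{F}_k(S(1,\l)))$ follows immediately from Remark~\ref{gjassumptions}, since $\mathfrak{G}_k$ is a two-sided inverse of $\mathfrak{F}_k$ on the full subcategory of such modules.

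The only subtle point — and the step I expect to require the most care — is the claim that the socle of $k|_B^G$ has all its constituents in ${\rm{Irr}}_k(G|B)$. This is exactly the statement that the irreducible modules appearing in $\soc(k|_B^G)$ are by definition those in the unipotent principal series ${\rm{Irr}}_k(G|B)$ (recalled in the introduction: ${\rm{Irr}}_k(G|B)$ consists of the irreducibles found in the head, equivalently the socle, of $k|_B^G$), together with the fact that $k|_B^G$ is self-dual so that $\soc(k|_B^G) \cong \head(k|_B^G)^{*}$ has the same constituents as $\head(k|_B^G)$; alternatively one cites \cite[4.2.6]{gj}. Everything else is formal bookkeeping about submodules and quotients of an indecomposable module with simple head, so no genuine calculation is needed.
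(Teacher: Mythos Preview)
Your proposal is correct and follows essentially the same approach as the paper: the paper's justification (given in the paragraph immediately preceding the lemma) consists precisely of the observations that $S(1,\l)\subseteq k|_B^G$ so that $\soc(S(1,\l))\subseteq\soc(k|_B^G)$ has all constituents in ${\rm Irr}_k(G|B)$, and that $\head(S(1,\l))=D'(1,\l)\in{\rm Irr}_k(G|B)$ when $\l$ is $l$-regular. You have simply spelled out explicitly why these two facts yield conditions (1) and (2) of Remark~\ref{gjassumptions}, which the paper leaves to the reader with the phrase ``these observations justify the next result.''
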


Suppose now that $\l$ is an $l$-restricted partition of $n$. Then, $\l'$ is $l$-regular and Lemma \ref{spechtremark} tells us that $S(1,\l')$ is in the image of the Hecke functor $\mathfrak{G}_k: \text{mod-}H \to \text{mod-}kG$. So, by Lemma \ref{annihilated}, $S(1,\l')$ is annihilated by the ideal $J_k$ of $kG$, which means that we may apply the CPS functor $\bar{F}$ (which takes as inputs $kG/J_k$-modules) to $S(1,\l')$. In Proposition \ref{delta}, we will show that $\bar{F}$ maps $S(1,\l')$ to the standard module $\Delta(\l)$ for the $q$-Schur algebra $S_q(n,n)$. The apparent mismatch in labeling is due to the differences between the CPS and DJ indexings of the irreducible $kG$-modules. In the indexing of Dipper and James (and Dipper and Du), $\head(S(1,\l'))=D'(1,\l')$. By Lemma \ref{indexcompare}, $D'(1,\l')$ is isomorphic to $D(1,\l)$ in the indexing of Cline, Parshall, and Scott. It follows that when $\l \vdash n$ is $l$-restricted, the indecomposable right $kG$-module $S(1,\l')$ of Dipper and James has the irreducible $D(1,\l)$ (in the CPS indexing) as its head. \\

\begin{proposition}\label{delta}
If $l>2$, $\l$ is an $l$-restricted partition of $n$, and $r \centernot\mid q(q-1)$, then $\bar{F}(S(1,\l')) \cong \Delta(\lambda)$ (where $\Delta(\lambda)$ is the standard object corresponding to $\l$ in the category of right $S_q(n,n)$-modules).
\end{proposition}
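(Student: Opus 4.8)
The plan is to compute $\bar{F}(S(1,\l'))$ by rewriting $S(1,\l')$ through the Hecke functor, transporting the resulting expression to the Hecke algebra side via the bimodule $T$ of Section~5, and then recognising the answer as $\Delta(\lambda)$ using the description of Specht modules from Lemma~\ref{specht}. For the setup: since $\l$ is $l$-restricted, $\l'$ is $l$-regular, so by Lemma~\ref{spechtremark} we have $S(1,\l')\cong\mathfrak{G}_k(\mathfrak{F}_k(S(1,\l')))$, and $\mathfrak{F}_k(S(1,\l'))\cong S_{\l'}$ is the Specht module for $H$ by the property of $S(1,\l')$ recalled before Lemma~\ref{spechtremark}; hence $S(1,\l')\cong S_{\l'}\otimes_H k|_B^G$. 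By Lemma~\ref{annihilated}, $S(1,\l')$ is annihilated by $J_k$, so $\bar{F}(S(1,\l'))$ is defined, and since every composition factor of $S(1,\l')\subseteq k|_B^G$ is of the form $D(1,\mu)$ with $\mu\vdash n$, it is a module for the $s=1$ summand $S_q(n,n)$; on that summand $\bar{F}=\Hom_{kG/J_k}(\widehat{M}_{1,G,k},-)$, and $\widehat{M}_{1,G,k}$ is a finitely generated projective $kG/J_k$-module (its image under $\bar{F}$ is the regular module $S_q(n,n)$). Finally $\head\, S(1,\l')=D'(1,\l')\cong D(1,\l)$ by Lemma~\ref{indexcompare}, so, $\bar{F}$ being a Morita equivalence, $\bar{F}(S(1,\l'))$ is indecomposable with head $L^k(\l)$; this matches $\head\,\Delta(\l)$ and will be used at the end.

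Next I would transport the computation to the Hecke algebra side. Because $\widehat{M}_{1,G,k}$ is finitely generated projective over $kG/J_k$ and $S(1,\l')=S_{\l'}\otimes_H k|_B^G$, there is a natural isomorphism of right $S_q(n,n)$-modules
$$\bar{F}(S(1,\l'))\;\cong\; S_{\l'}\otimes_H \Hom_{kG}\!\big(\widehat{M}_{1,G,k},\,k|_B^G\big).$$
The $H$-$S_q(n,n)$-bimodule $\Hom_{kG}(\widehat{M}_{1,G,k},k|_B^G)$ is then identified exactly as $\Hom_{kG}(k|_B^G,\widehat{M}_{1,G,k})$ was in the proof of Lemma~\ref{Iso}: pass to the $r$-modular system, use that $k|_B^G$ and $\widehat{M}_{1,G,\O}=\bigoplus_{\mu\vdash n}\sqrt{y_\mu\,\O|_B^G}^{\,k_\mu}$ are projective and that $k|_B^G$ is self-dual, and apply the Dipper--James $\Hom$-identifications \cite[Lem.~2.22, 2.15, 2.8]{dj}; the outcome is the bimodule $\widetilde{T}$ of Section~5, twisted by the involution $\Phi$ and/or the duality $D_{\widetilde{H}}$. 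Substituting, one gets $\bar{F}(S(1,\l'))\cong S_{\l'}\otimes_H T^{\bullet}$ for the appropriate twist $T^{\bullet}$, a right $S_q(n,n)$-module through the left $S_q(n,n)$-action on $T$.

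Finally I would recognise $\Delta(\lambda)$. By Lemma~\ref{specht}, $\widetilde{S}_{\l'}\cong x_{\l'}\widetilde{H}_\O\,y_{\l}\widetilde{H}_\O$, while the standard module $\widetilde{\Delta}(\lambda)$ for $S_t(n,n)$ admits a parallel description in terms of the Specht module $\widetilde{S}^{\lambda}$ and the tensor space $V^{\otimes n}$ (work of Dipper--James and Parshall--Scott). Combining this with the formula of the previous step and reducing modulo $r$, one checks that $S_{\l'}\otimes_H T^{\bullet}\cong\Delta(\lambda)$: the transposition $\l\leftrightarrow\l'$ is produced by the $\Phi$-twist (which interchanges the roles of $x_\mu$ and $y_\mu$, hence of a partition and its dual), while the duality $D_{\widetilde{H}}$ is what selects the standard object $\Delta(\lambda)$ rather than its contravariant dual $\nabla(\lambda)$. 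Alternatively, once the previous step furnishes a $\Delta$-filtration of $\bar{F}(S(1,\l'))$ whose top section is labelled by $\l$, the isomorphism is forced by the head computation of Step~1 together with a comparison of composition multiplicities against the known $q$-Schur decomposition numbers of $\Delta(\lambda)$.

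The crux — and the step where the hypothesis $l>2$ is consumed — is the twist/duality/transpose bookkeeping just described: tracking $\Phi$, the dualities $D_{\widetilde{H}}$ and $(-)^*$, and the transpose on partitions carefully enough that the answer comes out as $\Delta(\lambda)$ rather than $\nabla(\lambda)$, $\Delta(\l')$, or one of their duals, and assembling the structural inputs on Specht modules and standard modules for $S_q(n,n)$ from \cite{dj}, \cite{dd}, \cite{dps}, \cite{ps} into the single statement $S_{\l'}\otimes_H T^{\bullet}\cong\Delta(\lambda)$. The hypothesis $l>2$ presumably enters precisely here, guaranteeing that the relevant $\Hom$ space is one-dimensional and that $S(1,\l')$ has the expected thin submodule structure, so that no extra standard sections appear.
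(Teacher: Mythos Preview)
Your setup matches the paper exactly: reduce $\bar{F}$ to the $s=1$ summand, write $S(1,\l')\cong S_{\l'}\otimes_H k|_B^G$ via Lemma~\ref{spechtremark}, and transport to the Hecke side through the bimodule $T$. The paper, however, does the transport contravariantly: it first rewrites $S_{\l'}\otimes_H k|_B^G\cong\Hom_H((k|_B^G)^*,S_{\l'})$ as $kG$-modules, then applies tensor--hom adjunction to obtain
\[
\bar{F}(S(1,\l'))\;\cong\;\Hom_H\bigl(\widehat{M}_{1,G,k}\otimes_{kG}(k|_B^G)^*,\,S_{\l'}\bigr)\;\cong\;\Hom_H(T^\Phi,S_{\l'}),
\]
using Lemma~\ref{Iso} directly for the last step. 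Your covariant manoeuvre $\bar{F}(S(1,\l'))\cong S_{\l'}\otimes_H\Hom_{kG}(\widehat{M}_{1,G,k},k|_B^G)$ is formally dual to this, and could be made to work, but it forces you to identify a \emph{different} bimodule $\Hom_{kG}(\widehat{M}_{1,G,k},k|_B^G)$ from scratch; the paper's route lets Lemma~\ref{Iso} do that job verbatim.

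The real gap is in your final step. The paper does not ``check'' that the twists sort themselves out; it invokes a specific result, namely \cite[Thm.~7.7]{dps}, which identifies $\Hom_{H_\O}(\widetilde{T}^\Phi,\widetilde{S}_{\l'})$ with $\widetilde{\Delta}^{\text{left}}(\l)^{\widetilde{\beta}}$, and then uses the chain $\widetilde{\Delta}^{\text{left}}(\l)^{\widetilde{\beta}}\cong\widetilde{\nabla}^{\text{left}}(\l)^*\cong\widetilde{\Delta}(\l)$ (the anti-automorphism $\widetilde{\beta}$ of \cite[Lem.~2.2]{dps} is what converts left to right here). Your sketch of $\Phi$ handling the transpose and $D_{\widetilde{H}}$ selecting $\Delta$ over $\nabla$ is morally right but not a proof; without \cite[Thm.~7.7]{dps} or an equivalent input you have no mechanism to land on $\Delta(\l)$ rather than one of the other three candidates you list.

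You also misplace the hypothesis $l>2$. It has nothing to do with one-dimensionality of $\Hom$ spaces or thinness of $S(1,\l')$. The identification $\Hom_{H_\O}(\widetilde{T}^\Phi,\widetilde{S}_{\l'})\cong\widetilde{\Delta}(\l)$ is obtained over $\O$; the hypothesis $l>2$ is consumed precisely in showing that this isomorphism survives base change to $k$, by the argument of \cite[Lem.~2.4]{ps}. If you work directly over $k$ throughout, as your tensor-product approach implicitly does, you never see where $l>2$ is needed, and indeed the statement can fail at $l=2$.
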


\begin{proof}
Since $\l'$ is $l$-regular, it follows from the discussion preceding Lemma \ref{spechtremark} that all composition factors of $S(1,\l')$ belong to $B_{1,G}$, where $B_{1,G}$ is the sum of the unipotent blocks of $G$. But, as shown in the proof of \cite[Thm. 9.17]{kyoto}, all composition factors of the head of the $kG/J_k$-module $\widehat{M}_{s,G,k}$ (where $s \in \C_{ss,r'}$) belong to $B_{s,G}$. In particular, when $s \neq 1$, the head of $\widehat{M}_{s,G,k}$ has no irreducible constituents belonging to $B_{1,G}$. So, viewing $\widehat{M}_{s,G,k}$ as a $kG$-module via the natural quotient map $kG \twoheadrightarrow kG/J_k$, we have $\Hom_{kG}(\widehat{M}_{s,G,k}, S(1,\l')) =0$ when $s \neq 1$. Thus,
\begin{align*} 
\bar{F}(S(1,\l')) &=\Hom_{kG/J_k}(\underset{s \in \C_{ss,r'}}{\oplus} \widehat{M}_{s,G,k}, S(1,\l')) \\
&\cong \Hom_{kG}(\underset{s \in \C_{ss,r'}}{\oplus} \widehat{M}_{s,G,k}, S(1,\l')) \\
&\cong \Hom_{kG}(\widehat{M}_{1,G,k}, S(1,\l')),
\end{align*}
where $\Hom_{kG}(\widehat{M}_{1,G,k}, S(1,\l'))$ is viewed as a right $S_q(n,n)_k$-module via the natural left action of $S_q(n,n)_k \cong \End_{kG}(\widehat{M}_{1,G,k})$ on the right $kG$-module $\widehat{M}_{1,G,k}$. \\

According to Lemma \ref{spechtremark}, we can write $S(1,\l') \cong \mathfrak{G}_k(\F_k(S(1,\l')))$. By \cite[(4.2.3, (1))]{dd}, the Hecke functor $\F_k$ maps $S(1,\l')$ to the Specht module $S_{\l'}$, which means that  $S(1,\l') \cong \mathfrak{G}_k(S_{\lambda'})= S_{\lambda'} \otimes_H k|_B^G$ (here, we have used Lemma \ref{specht} to identify the Dipper-James Specht module $S^{\l'}$ with the CPS Specht module $S_{\l'}$). We claim that there is an isomorphism $S_{\lambda'} \otimes_H k|_B^G \cong \Hom_H((k|_B^G)^*,S_{\lambda'})$ of right $kG$-modules. (The right action of $kG$ on $\Hom_H((k|_B^G)^*,S_{\lambda'})$ is given by $(\phi.\alpha)(f)=\phi(\alpha.f)$ for any $\alpha \in kG$, $\phi \in \Hom_H((k|_B^G)^*,S_{\lambda'})$, and $f \in (k|_B^G)^*$.) It is known that there is a vector space isomorphism $\Omega: S_{\lambda'} \otimes_H k|_B^G \to \Hom_H((k|_B^G)^*,S_{\lambda'})$, given by $\Omega(s \otimes x) = (f \mapsto f(x)s)$ for any $s \in S_{\lambda'}$ and $x \in k|_B^G$. So, to prove the claim, it suffices to show that $\Omega$ respects the right action of $kG$. But, given any $\alpha \in kG$, $s \in S_{\lambda'}$, $x \in k|_B^G$, and $f \in (k|_B^G)^*$, $\Omega((s \otimes x).\alpha)(f)=\Omega(s \otimes x\alpha)(f)=f(x\alpha)s=(\alpha.f)(x)s=\Omega(s \otimes x)(\alpha.f)=(\Omega(s \otimes x).\alpha)(f)$. Thus, $\Omega$ is a right $kG$-module isomorphism and the claim follows. \\

Since $S_{\lambda'} \otimes_H k|_B^G \cong \Hom_H((k|_B^G)^*,S_{\lambda'})$ (as right $kG$-modules),
$$\Hom_{kG}(\widehat{M}_{1,G,k}, S(1,\l')) \cong \Hom_{kG}(\widehat{M}_{1,G,k}, \Hom_H((k|_B^G)^*, S_{\lambda'})) \cong \Hom_H(\widehat{M}_{1,G,k} \otimes_{kG} (k|_B^G)^*, S_{\lambda'})$$
as right $S_q(n,n)$-modules (the third isomorphism in the chain of isomorphisms above follows by tensor-hom adjunction, which preserves the right $S_q(n,n)$-module structure of \\
$\Hom_{kG}(\widehat{M}_{1,G,k}, \Hom_H((k|_B^G)^*, S_{\lambda'}))$. \\

By Lemma \ref{Iso}, $\widehat{M}_{1,G,k} \otimes_{kG} (k|_B^G)^* \cong T^\Phi$ as right $H$-modules. Thus, tracing through the calculations above, we have $$\bar{F}(S(1,\l')) \cong \Hom_H(T^\Phi, S_{\lambda'}),$$
with the right action of $S_q(n,n)$ on $\Hom_H(T^\Phi, S_{\lambda'})$ defined via the left action of $S_q(n,n)$ on $T^\Phi$. Now, it follows from the proof of \cite[Thm. 7.7]{dps} that the right $S_q(n,n)_\O$-module $\Hom_{H_\O}(\widetilde{T}^\Phi, \widetilde{S}_{\lambda'})$ identifies with $\widetilde{\Delta}^{\text{left}}(\lambda)^{\widetilde{\beta}}$, where $\widetilde{\Delta}^{\text{left}}(\lambda)$ is the standard object corresponding to the partition $\lambda$ in the category of left $S_q(n,n)_\O$-modules and $\widetilde{\Delta}^{\text{left}}(\lambda)^{\widetilde{\beta}}$ is the right $S_q(n,n)_\O$-module obtained by converting the left action of $S_q(n,n)_\O$ on $\widetilde{\Delta}^{\text{left}}(\lambda)$ to a right action via the anti-automorphism $\tilde{\beta}$ defined in \cite[Lem. 2.2]{dps}. But, since 
$$(\widetilde{\Delta}^{\text{left}}(\lambda))^{*\widetilde{\beta}} = (\widetilde{\Delta}^{\text{left}}(\lambda))^{D_{S_q(n,n)_\O}} \cong \widetilde{\nabla}^{\text{left}}(\lambda)$$
(where $D_{S_q(n,n)}$ is the duality on $\text{mod-}S_q(n,n)_\O$), we have $\widetilde{\Delta}^{\text{left}}(\lambda)^{\widetilde{\beta}} \cong \widetilde{\nabla}^{\text{left}}(\lambda)^*$ and 
$$\Hom_{H_\O}(\widetilde{T}^\Phi, \widetilde{S}_{\lambda'}) \cong \widetilde{\Delta}^{\text{left}}(\lambda)^{\widetilde{\beta}} \cong \widetilde{\nabla}^{\text{left}}(\lambda)^*.$$ 
Since $\widetilde{\nabla}^{\text{left}}(\lambda)^* \cong \widetilde{\Delta}(\lambda)$ (where $\widetilde{\Delta}(\lambda)$ is the standard object corresponding to $\lambda$ in the category of right $S_q(n,n)_\O$-modules), it follows that 
$$\Hom_{H_\O}(\widetilde{T}^\Phi, \widetilde{S}_{\lambda'}) \cong \widetilde{\Delta}(\lambda).$$ 
Finally, when $l>2$, an argument analogous to that given in the second part of \cite[Lem. 2.4]{ps} shows that the isomorphism $\Hom_{H_\O}(\widetilde{T}^\Phi, \widetilde{S}_{\lambda'}) \cong \widetilde{\Delta}(\lambda)$ holds upon base change to $k$. Therefore, when $l>2$, $$\bar{F}(S(1,\l')) \cong \Hom_H(T^\Phi, S_{\lambda'}) \cong \Delta(\lambda).$$
\end{proof}

As above, let $\widetilde{H}$ denote the generic Hecke algebra corresponding to the pair $(\mathfrak{S}_n,S)$, where $\mathfrak{S}_n$ is the symmetric group on $n$ letters, and $S$ is the generating set of fundamental reflections in $\mathfrak{S}_n$. Let $\widetilde{H}_\O$ denote the $\O$-algebra obtained by base change to $\O$. In order to generalize \cite[Thm. 12.4]{kyoto}, we must construct a suitable resolution of $S(1,\l')$ when $\l$ is $l$-restricted. To construct this resolution, we will use the functors $\mathfrak{N}_i: \text{mod-}\widetilde{H}_\O \to \text{mod-}\widetilde{H}_\O$ ($0 \leq i \leq n-1$) defined by Parshall and Scott in \cite[Sec. 3.1]{ps}. Given a subset $J \subseteq S$, there is a restriction functor $\text{Res}^{\widetilde{H}_\O}_{(\widetilde{H}_\O)_J}$ from $\text{mod-}\widetilde{H}_\O$ (the category of right $\widetilde{H}_\O$-modules) to $\text{mod-}(\widetilde{H}_\O)_J$ (the category of right modules for the parabolic subalgebra $(\widetilde{H}_\O)_J$ of $\widetilde{H}_\O$). There is also an induction functor ${\text{Ind}}_{(\widetilde{H}_\O)_J}^{\widetilde{H}_\O} = - \otimes_{(\widetilde{H}_\O)_J} \widetilde{H}_\O: \text{mod-}(\widetilde{H}_\O)_J \to \text{mod-}\widetilde{H}_\O$. (The induction functor ${\text{Ind}}_{(\widetilde{H}_\O)_J}^{\widetilde{H}_\O}$ is a left adjoint of $\text{Res}^{\widetilde{H}_\O}_{(\widetilde{H}_\O)_J}$.) Since $|S|=n-1$, we may define $\mathfrak{N}_i$ for $0 \leq i \leq n-1$ by 
$$\mathfrak{N}_i= \underset{J \subseteq S, |J|=i}{\prod} ~^{\otimes}{\text{Ind}}_{\widetilde{H}_J}^{\widetilde{H}} \circ \text{Res}^{\widetilde{H}}_{\widetilde{H}_J}: \text{mod-}\widetilde{H}_\O \to \text{mod-}\widetilde{H}_\O.$$

\begin{lemma}\label{resolution}
Suppose that $r \centernot\mid q(q-1)$ and $\l$ is an $l$-restricted partition of $n$. Then, there exists an exact sequence of right $kG$-modules of the form $0 \to M_{n-1} \to \cdots \to \cdots M_1 \to M_0 \to S(1,\l') \to 0$ in which every module is annihilated by $J_k$ and $M_i$ is projective as both a $kG$ and a $kG/J_k$-module for $0 \leq i \leq l-2$.
\end{lemma}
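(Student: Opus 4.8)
The plan is to build the desired resolution by first constructing it at the level of Hecke modules over $\widetilde H_\O$ using the functors $\mathfrak N_i$ of Parshall and Scott, then transporting it to $\text{mod-}kG$ via the right-inverse Hecke functor $\mathfrak G_k$ and controlling the homological behaviour on both sides. Since $\l$ is $l$-restricted, $\l'$ is $l$-regular, and by \cite[(4.2.3, (1))]{dd} together with Lemma \ref{specht} the Hecke functor $\F_k$ sends $S(1,\l')$ to the Specht module $S_{\l'}$; by Lemma \ref{spechtremark} we have $S(1,\l') \cong \mathfrak G_k(S_{\l'})$. So I would start from a resolution of the $\widetilde H_\O$-Specht module $\widetilde S_{\l'}$ of the form $0 \to \widetilde N_{n-1} \to \cdots \to \widetilde N_1 \to \widetilde N_0 \to \widetilde S_{\l'} \to 0$, where the $\widetilde N_i$ are built from the functors $\mathfrak N_i$ applied to $\widetilde S_{\l'}$ (this is precisely the kind of resolution PS construct in \cite[Sec. 3]{ps}; an Akin--Buchsbaum--Weyman / Boltje--Hartmann style complex). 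The key structural fact I want from that construction is that each $\widetilde N_i$ is, up to base change, a direct sum of modules of the form $x_\nu \widetilde H_\O$ for various compositions $\nu$ (equivalently, a direct summand of a sum of permutation-type modules $\O|_{P_\nu}^{P}$ induced up).

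Next I would apply $\mathfrak G_k$ (after reduction mod the maximal ideal of $\O$) term by term. The two things that need checking are exactness and the identification of the terms. For exactness: the resolution of $\widetilde S_{\l'}$ reduces to an exact sequence of $H = \widetilde H_k$-modules because all the modules involved are $\O$-free (the $x_\nu\widetilde H_\O$ and their Young summands are $\O$-free, and Specht modules over $\O$ are $\O$-free), so Tor-vanishing gives exactness after reduction; then $\mathfrak G_k(-) = - \otimes_H k|_B^G$ is exact on this complex because $k|_B^G$ is projective as a left $H$-module (equivalently, because each term and each syzygy in the complex lies in the subcategory of Remark \ref{gjassumptions} where $\mathfrak G_k$ is an honest two-sided inverse of $\F_k$ — the terms $\mathfrak N_i(S_{\l'})$ and their kernels have all composition factors in ${\rm{Irr}}_k(G|B)$, exactly as argued for $S(1,\l')$ itself in the discussion before Lemma \ref{spechtremark}). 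For the identification: $\mathfrak G_k$ sends $x_\nu H$ to $\mathfrak G_k(x_\nu H) = x_\nu H \otimes_H k|_B^G \cong x_\nu k|_B^G$, which by the standard Hecke-algebra description of parabolic permutation modules (cf. the discussion in Section 3.1.2 and \cite{dj}, \cite{james}) is a direct summand of the permutation module $k|_{P_\nu}^G$, hence a projective $kG$-module since $r \nmid |P_\nu|$. Setting $M_i := \mathfrak G_k(\text{reduction of }\widetilde N_i)$ gives the sequence $0 \to M_{n-1} \to \cdots \to M_0 \to S(1,\l') \to 0$.

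It remains to verify the two claimed properties of the $M_i$. That every $M_i$ is annihilated by $J_k$ is immediate from Lemma \ref{annihilated}, since each $M_i$ is by construction in the image of $\mathfrak G_k$; the same then holds for $S(1,\l')$ (already noted) and for all syzygies, so this is a property of the whole complex. For projectivity of $M_i$ as a $kG/J_k$-module when $0 \le i \le l-2$: each such $M_i$ is a projective $kG$-module (being a summand of a parabolic permutation module), and — as in the proof of Theorem \ref{general1}, using \cite[Rem. 9.18(c)]{kyoto} — a projective $kG$-module annihilated by $J_k$ is automatically a projective $kG/J_k$-module. The reason the projectivity only runs through $i \le l-2$ is that the terms $\widetilde N_i$ of the PS-type resolution are sums of $x_\nu \widetilde H$'s (hence $\mathfrak G_k$ lands in permutation modules) only in that range; beyond degree $l-2$ the combinatorics of the resolution forces Young modules that are not permutation-projective, so one can only assert exactness there, not projectivity. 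I expect the main obstacle to be precisely the bookkeeping in this last point: extracting from \cite{ps} (and the ABW/Boltje--Hartmann machinery behind the functors $\mathfrak N_i$) the explicit statement that the terms in degrees $0$ through $l-2$ are genuine parabolic-permutation modules rather than merely Young-module summands, and checking that the $l$-restrictedness hypothesis on $\l$ is exactly what makes this cutoff work — everything else (exactness via $\O$-freeness, the $\mathfrak G_k$-image computation, annihilation by $J_k$, and projectivity over $kG/J_k$) is then formal.
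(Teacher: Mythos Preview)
Your broad strategy is right --- build a resolution of the Specht module over the Hecke algebra using the Parshall--Scott functors $\mathfrak N_i$, then push it through $\mathfrak G_k$ --- but the execution has a real gap in the first step. The complex of \cite[Thm.~3.4]{ps} does not give a resolution of $\widetilde S_{\l'}$ with terms $\mathfrak N_i(\widetilde S_{\l'})$; it is a \emph{coresolution}
\[
0 \to \widetilde S_\mu^{\,\Phi} \to \mathfrak N_0(\widetilde S_\mu) \to \mathfrak N_1(\widetilde S_\mu) \to \cdots \to \mathfrak N_{n-1}(\widetilde S_\mu) \to 0,
\]
with the twisted Specht module sitting as a kernel on the left. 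The paper therefore starts with $\mu=\l$ (not $\l'$), base-changes to $k$, and then applies the contravariant duality $D_H$ to reverse the arrows, invoking the identification $(S_\l^{\Phi})^{D_H}\cong S_{\l'}$ from \cite[Prop.~7.3]{dps}. This dualization step is the missing maneuver in your proposal; without it you do not actually have a complex terminating in $S_{\l'}$.

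Your projectivity argument also does not go through as written. The terms $\mathfrak N_i(S_\l)$ are not in general direct sums of $x_\nu H$'s, and even if they were, your claim that $r\nmid |P_\nu|$ is false for most parabolics (the Levi factors are products of $\GL_{\nu_i}(q)$, whose orders are divisible by $r$ as soon as $\nu_i\ge l$). The correct reason $\mathfrak N_i(S_\l)$ is projective for $i\le l-2$ is that $H_J$ is semisimple whenever $|J|\le l-2$ (so restriction lands in projectives and induction preserves them); this is \cite[Rem.~3.10]{ps}, and it has nothing to do with the $l$-restrictedness of $\l$. Projectivity survives $D_H$ because $H$ is a symmetric algebra, and then $\mathfrak G_k$ carries projective $H$-modules to summands of copies of $k|_B^G$, hence to $kG$-projectives. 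The $l$-restrictedness of $\l$ is needed only to ensure $\l'$ is $l$-regular, so that Lemma~\ref{spechtremark} applies to $S(1,\l')$; it plays no role in the cutoff at $l-2$.
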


\begin{proof}
By \cite[Thm. 3.4]{ps}, there exists an exact sequence of right $\widetilde{H}_\O$-modules of the form 
$$0 \to \widetilde{S_\l}^\Phi \to \mathfrak{N}_0(\widetilde{S_\l}) \to \mathfrak{N}_1(\widetilde{S_\l}) \to \cdots \to \mathfrak{N}_{n-1}(\widetilde{S_\l}) \to 0.$$ By \cite[Rem. 3.5]{ps}, this sequence remains exact upon base change to $k$, yielding the exact sequence
$$0 \to S_\l^\Phi \to \mathfrak{N}_0(S_\l) \to \mathfrak{N}_1(S_\l) \to \cdots \to \mathfrak{N}_{n-1}(S_\l) \to 0$$
of right $H$-modules. Now, by \cite[Rem. 3.10]{ps}, $\mathfrak{N}_i(S_\l)$ is a projective right $H$-module for $i \leq l-2$ (this follows because $H_J$ is semisimple for any $J \subseteq S$ with $|J|=i$ and $\mathfrak{N}_i$ is exact). Applying the contravariant duality functor $D_H: \text{mod-}H-\text{mod-}H$ to the exact sequence above, we obtain the exact sequence
$$0 \to \mathfrak{N}_{n-1}(S_\l)^{D_H} \to \cdots \to \mathfrak{N}_1(S_\l)^{D_H} \to \mathfrak{N}_0(S_\l)^{D_H} \to (S_\l^\Phi)^{D_H} \to 0$$
of right $H$-modules, in which $\mathfrak{N}_i(S_\l)^{D_H}$ is projective for $i \leq l-2$. But, by \cite[Prop. 7.3]{dps}, $(S_\l^\Phi)^{D_H} \cong S_{\l'}$; therefore, the exact sequence above can be re-written as
$$0 \to \mathfrak{N}_{n-1}(S_\l)^{D_H} \to \cdots \to \mathfrak{N}_1(S_\l)^{D_H} \to \mathfrak{N}_0(S_\l)^{D_H} \to S_{\l'} \to 0.$$
Since $r \centernot\mid q(q-1)$ and $k|_B^G$ is a projective right $kG$-module, the functor $\mathfrak{G}_k(-)=- \otimes_{H} k|_B^G$ is exact.\footnote{Since $k|_B^G$ is a projective $kG$-module, the functors $\mathfrak{F}_k$ and $\mathfrak{G}_k$ form an equivalence of abelian categories between the full subcategory of mod-$kG$ consisting of modules satisfying the conditions of Remark \ref{gjassumptions} and mod-$H$ \cite[4.1.4]{gj}. It follows that the functor $\mathfrak{G}_k$ is exact.} Applying $\mathfrak{G}_k$ to the exact sequence above and using the isomorphism $\mathfrak{G}_k(S_{\l'}) \cong S(1,\l')$ (valid since $S(1,\l')$ satisfies the assumptions of Remark \ref{gjassumptions}), we obtain the exact sequence
\begin{equation}\label{18masterres}
0 \to \mathfrak{G}_k(\mathfrak{N}_{n-1}(S_\l)^{D_H}) \to \cdots \to \mathfrak{G}_k(\mathfrak{N}_1(S_\l)^{D_H}) \to \mathfrak{G}_k(\mathfrak{N}_0(S_\l)^{D_H}) \to S(1,\l') \to 0
\end{equation}
of right $kG$-modules. \\

By Lemma \ref{annihilated}, each of the right $kG$-modules in the exact sequence (\ref{18masterres}) is annihilated by $J_k$, which means that (\ref{18masterres}) is also an exact sequence of right $kG/J_k$-modules. For $0 \leq i \leq n-1$, let $M_i=\mathfrak{G}_k(\mathfrak{N}_i(S_\l))^{D_H}$. Since $\mathfrak{N}_i(S_\l)^{D_H}$ is a projective right $H$-module for $i \leq l-2$ and $\mathfrak{G}_k$ is exact, $M_i$ is a projective right $kG$-module for $i \leq l-2$. It follows from these observations that $M_i$ is also a projective $kG/J_k$-module for $i \leq l-2$. \\
\end{proof}

We are now ready to prove our generalization of \cite[Thm. 12.4]{kyoto}. \\

\begin{theorem}\label{bigone}
Suppose that $r \centernot\mid q(q-1)$, $l>2$, and $\l$ is an $l$-restricted partition of $n$. If $V$ is a right $kG$-module with $J_k \subseteq \text{Ann}_{kG}(V)$, then $\Ext^{~i}_{kG}(S(1,\l'),V) \cong \Ext^{~i}_{S_q(n,n)}(\Delta(\l),\bar{F}(V))$ for $0 \leq i \leq l-1$.
\end{theorem}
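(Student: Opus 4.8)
The plan is to route the isomorphism through the quotient algebra $kG/J_k$, in two essentially independent steps: (A) $\Ext^i_{kG}(S(1,\l'),V)\cong\Ext^i_{kG/J_k}(S(1,\l'),V)$ for $0\le i\le l-1$; and (B) $\Ext^i_{kG/J_k}(S(1,\l'),V)\cong\Ext^i_{S_q(n,n)}(\Delta(\l),\bar F(V))$ in every degree. Step (B) is the easier one. Both $S(1,\l')$ and $V$ are genuine $kG/J_k$-modules: $V$ by hypothesis, and $S(1,\l')$ because it lies in the image of $\mathfrak{G}_k$ (Lemma \ref{spechtremark}) and hence is annihilated by $J_k$ (Lemma \ref{annihilated}). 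Since $\bar F$ is an equivalence of abelian categories it is exact and preserves projectives, hence preserves all $\Ext$-groups, so $\Ext^i_{kG/J_k}(S(1,\l'),V)\cong\Ext^i_{\mathcal A}(\bar F(S(1,\l')),\bar F(V))$, where $\mathcal A=\bigoplus_{s\in\C_{ss,r'}}\bigotimes_{i=1}^{m(s)}S_{q^{a_i(s)}}(n_i(s),n_i(s))_k$. By Proposition \ref{delta} — and this is where the hypothesis $l>2$ enters — $\bar F(S(1,\l'))\cong\Delta(\l)$, which is supported entirely on the $s=1$ summand $S_q(n,n)$ of $\mathcal A$. Therefore $\Ext$ out of $\Delta(\l)$ over $\mathcal A$ reduces to $\Ext$ over $S_q(n,n)$ applied to $\bar F(V)$ regarded as an $S_q(n,n)$-module through the quotient $\mathcal A\twoheadrightarrow S_q(n,n)$ — which is precisely the convention for $\Ext^i_{S_q(n,n)}(\Delta(\l),\bar F(V))$ already in force in Theorem \ref{general1}.

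For step (A) I would use the resolution of Lemma \ref{resolution}, written $0\to M_{n-1}\to\cdots\to M_0\xrightarrow{\epsilon}S(1,\l')\to 0$. Put $\Omega^0=S(1,\l')$ and, for $1\le j\le n-1$, let $\Omega^j=\operatorname{im}(M_j\to M_{j-1})\subseteq M_{j-1}$; exactness of the resolution gives short exact sequences $0\to\Omega^{j+1}\to M_j\to\Omega^j\to 0$. Two observations drive the comparison. First, for every $j$ the module $\Omega^j$ is (a subobject of) a module killed by $J_k$, so it is literally the same object over $kG$ and over $kG/J_k$, and $M_0,\dots,M_{l-2}$ are projective over both $kG$ and $kG/J_k$. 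Second, for any pair of $J_k$-annihilated modules a $kG$-linear map is automatically $kG/J_k$-linear, so $\Hom_{kG}(M_{j-1},V)=\Hom_{kG/J_k}(M_{j-1},V)$ and $\Hom_{kG}(\Omega^j,V)=\Hom_{kG/J_k}(\Omega^j,V)$, with the restriction maps between them coinciding. Now fix $i$ with $1\le i\le l-1$. Iterated dimension shifting along the sequences $0\to\Omega^{j+1}\to M_j\to\Omega^j\to 0$ for $j=0,\dots,i-1$ — legitimate since $M_0,\dots,M_{i-1}$ are projective, noting $i-1\le l-2$ — yields $\Ext^i_{kG}(S(1,\l'),V)\cong\Ext^1_{kG}(\Omega^{i-1},V)\cong\operatorname{coker}\!\big(\Hom_{kG}(M_{i-1},V)\to\Hom_{kG}(\Omega^i,V)\big)$. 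The identical computation inside $\text{mod-}kG/J_k$ gives the same cokernel description of $\Ext^i_{kG/J_k}(S(1,\l'),V)$, and by the second observation the two cokernels coincide. The case $i=0$ is just $\Hom_{kG}(S(1,\l'),V)=\Hom_{kG/J_k}(S(1,\l'),V)$. Chaining (A) and (B) proves the theorem.

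The only real obstacle is controlling the degree range. The resolution of Lemma \ref{resolution} provides projective terms only in positions $0,\dots,l-2$, so the dimension-shift computation above goes through exactly for $i\le l-1$: computing $\Ext^{l-1}_{kG}(S(1,\l'),V)$ as that cokernel needs only $M_0,\dots,M_{l-2}$ projective — in particular $M_{l-1}$ itself need not be — whereas $\Ext^{l}$ would require $M_{l-1}$ projective, which is not available. The other point requiring care is that the two dimension-shift computations must produce the same $\Hom$-groups and connecting maps on the nose, not merely up to abstract isomorphism, so that the cokernel descriptions of $\Ext^i_{kG}$ and $\Ext^i_{kG/J_k}$ literally agree; this is exactly what is secured by the fact that every module appearing (the $\Omega^j$ and the $M_j$) is annihilated by $J_k$, so that $\Hom(-,V)$ cannot distinguish $kG$ from $kG/J_k$ on them.
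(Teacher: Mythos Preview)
Your proof is correct and follows essentially the same two-step route as the paper: first compare $\Ext^i_{kG}$ with $\Ext^i_{kG/J_k}$ in the range $0\le i\le l-1$ using the resolution of Lemma \ref{resolution}, then transport through the Morita equivalence $\bar F$ and Proposition \ref{delta}. The only technical difference is that the paper carries out step (A) via the hypercohomology spectral sequence $\Ext^t_R(M_i,V)\Rightarrow\Ext^{i+t}_R(S(1,\l'),V)$ coming from a Cartan--Eilenberg resolution of $M_\bullet$, whereas you do the equivalent computation by iterated dimension shifting along the syzygies $\Omega^j$; your version is more elementary and makes the role of the bound $i-1\le l-2$ on the projective terms perhaps more transparent, while the spectral sequence packaging is what also yields the injection in degree $l$ noted in the remark following the theorem.
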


\begin{proof}
Given the results of Proposition \ref{delta} and Lemma \ref{resolution}, the proof of \cite[Thm. 12.4]{kyoto} goes through with virtually no change. \\

Let $0 \to M_{n-1} \to \cdots \to \cdots M_1 \to M_0 \to S(1,\l') \to 0$ be the exact sequence obtained in Lemma \ref{resolution}. This sequence is exact in both the category of right $kG$-modules and the category of right $kG/J_k$-modules. For $0 \leq i \leq l-2$, $M_i$ is projective for both $kG$ and $kG/J_k$. Let $R=kG$ or $kG/J_k$, and let $C_{\bullet \bullet}$ denote the double complex obtained by applying the functor $\Hom_R(-,V)$ to a Cartan-Eilenberg resolution of the complex $M_\bullet$. Filtering $C_{\bullet \bullet}$ by columns $C_{i \bullet}$ leads to the spectral sequence
$$\Ext^t_R(M_i,V) \Rightarrow \Ext^{i+t}_R(S(1,\l'),V).$$
But, $M_i$ is projective for $0 \leq i \leq l-2$, so $\Ext^t_R(M_i,V)=0$ for $t>0$ and $0 \leq i \leq l-2$. Since $\Hom_{kG}(-,-)$ and $\Hom_{kG/J_k}(-,-)$ are equivalent bifunctors on $\text{mod-}kG/J_k$, we have 
\begin{equation}\label{bigonequation}
\Ext^i_{kG}(S(1,\l'),V) \cong \Ext^i_{kG/J_k}(S(1,\l'),V) \text{ for } 0 \leq i \leq l-1.
\end{equation}
Finally, since $\bar{F}(S(1,\l')) \cong \Delta(\l)$ when $l>2$ (by Proposition \ref{delta}) and $\bar{F}$ is a Morita equivalence, we have $\Ext^i_{kG/J_k}(S(1,\l'),V) \cong \Ext^i_{S_q(n,n)}(\Delta(\l),\bar{F}(V))$ for all $i$. Combining this isomorphism with the isomorphism of (\ref{bigonequation}), we have $\Ext^i_{kG}(S(1,\l'),V) \cong \Ext^i_{S_q(n,n)}(\Delta(\l),\bar{F}(V))$ for $0 \leq i \leq l-1$. \\
\end{proof}

\begin{remark}
When $\l=(1^n)$, $\l'=(n)$ and $S(1,\l')=S(1,(n))=D'(1,(n))=D(1,(1^n))=k$. So, in this case, Theorem \ref{bigone} yields $H^i(G,V)\cong \Ext_{kG}^{i}(k,V) \cong \Ext^{i}_{S_q(n,n)}(L^k((1^n)),\bar{F}(V))$ for $0 \leq i \leq l-1$. \\
\end{remark}

\begin{remark}
The spectral sequences described in the proof of Theorem \ref{bigone} also yield a version of \cite[(12.4.2)]{kyoto}. Thus, if $r \centernot\mid q(q-1)$, $l>2$, $\l$ is an $l$-restricted partition of $n$, and $V$ is a right $kG$-module with $J_k \subseteq \text{Ann}_{kG}(V)$, then there is an injection $\Ext^{~l}_{S_q(n,n)}(\Delta(\l),\bar{F}(V)) \xhookrightarrow{} \Ext^{~l}_{kG}(S(1,\l'),V)$. \\
\end{remark}

Theorem \ref{bigone} allows us to use known Ext vanishing results for $S_q(n,n)$-modules to obtain new Ext vanishing results for $kG$-modules. For example, we can use the fact that $\text{mod-}S_q(n,n)$ is a highest weight category to prove the following corollary. \\

\begin{corollary}\label{18vanishing2}
Suppose that $r \centernot\mid q(q-1)$, $l>2$, and $\l$ is an $l$-restricted partition of $n$. 
\begin{enumerate}[(a)]
\item
If $\mu \vdash n$ is such that $\mu \trianglelefteq \l$, then $\Ext^{i}_{kG}(S(1,\l'),D(1,\mu))=0$ for $1 \leq i \leq l-1$.
\item
If $\mu \vdash n$ is $l$-restricted and $\mu \trianglelefteq \l$, then $\Ext^{i}_{kG}(S(1,\l'),S(1,\mu'))=0$ for $1 \leq i \leq l-1$.
\end{enumerate}
\end{corollary}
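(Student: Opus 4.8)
The plan is to reduce both parts of Corollary~\ref{18vanishing2} to Ext vanishing statements over the $q$-Schur algebra $S_q(n,n)$ via Theorem~\ref{bigone}, and then invoke the highest weight structure of $\text{mod-}S_q(n,n)$. For part (a), I would first note that $D(1,\mu)$ is annihilated by $J_k$: indeed, $D(1,\mu)$ is a composition factor of $k|_B^G$ (by \cite[Thm.~2.1]{scott}), which is a direct summand of $\widehat{M}_{1,G,k}$ and hence annihilated by $J_k$ (cf.\ the argument in Lemma~\ref{annihilated}), so $J_k$ annihilates every composition factor $D(1,\mu)$, and in particular $J_k \subseteq \text{Ann}_{kG}(D(1,\mu))$. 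Therefore Theorem~\ref{bigone} applies with $V=D(1,\mu)$, giving $\Ext^i_{kG}(S(1,\l'),D(1,\mu)) \cong \Ext^i_{S_q(n,n)}(\Delta(\l),\bar{F}(D(1,\mu)))$ for $0 \le i \le l-1$. Since $\bar{F}(D(1,\mu))=L^k(\mu)$ by construction (Section~4), this is $\Ext^i_{S_q(n,n)}(\Delta(\l),L^k(\mu))$.

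The key input is then the standard fact from highest weight theory: if $P(\mu) \twoheadrightarrow \Delta(\l)$ requires $\mu \trianglerighteq \l$, or more precisely that $\Ext^i(\Delta(\l),L^k(\mu))=0$ whenever $\mu \ntrianglerighteq \l$ and $i \ge 1$; equivalently $\Delta(\l)$ has a projective resolution whose terms $P(\nu)$ involve only $\nu \trianglerighteq \l$, and $\Hom(P(\nu),L^k(\mu))=0$ unless $\nu=\mu$, forcing $\mu \trianglerighteq \l$. Under the hypothesis $\mu \trianglelefteq \l$ with $\mu \neq \l$ we get $\mu \ntrianglerighteq \l$, hence $\Ext^i_{S_q(n,n)}(\Delta(\l),L^k(\mu))=0$ for all $i \ge 1$; and in the boundary case $\mu=\l$ one uses $\Ext^i(\Delta(\l),L^k(\l))=0$ for $i\ge 1$, which holds because $\Delta(\l)$ has simple head $L^k(\l)$ with higher radical layers built from $L^k(\nu)$, $\nu \triangleleft \l$, so no self-extensions or higher self-Ext can occur. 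In all cases $\Ext^i_{kG}(S(1,\l'),D(1,\mu))=0$ for $1 \le i \le l-1$, proving (a).

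For part (b), the strategy is the same but with $V = S(1,\mu')$. Since $\mu$ is $l$-restricted, $\mu'$ is $l$-regular, so by the discussion preceding Lemma~\ref{spechtremark} the module $S(1,\mu')$ lies in the image of $\mathfrak{G}_k$, hence by Lemma~\ref{annihilated} is annihilated by $J_k$; thus Theorem~\ref{bigone} applies and, using Proposition~\ref{delta} applied to $\mu$ (valid since $l>2$ and $\mu$ is $l$-restricted), $\bar{F}(S(1,\mu')) \cong \Delta(\mu)$. Therefore $\Ext^i_{kG}(S(1,\l'),S(1,\mu')) \cong \Ext^i_{S_q(n,n)}(\Delta(\l),\Delta(\mu))$ for $0 \le i \le l-1$. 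Now I invoke the standard highest weight fact that $\Ext^i(\Delta(\l),\Delta(\mu))=0$ for $i \ge 1$ whenever $\mu \ntrianglerighteq \l$; combined with $i=0$ giving $\Hom(\Delta(\l),\Delta(\mu))\neq 0$ only if $\l \trianglerighteq \mu$, the hypothesis $\mu \trianglelefteq \l$ is exactly the degenerate direction where all \emph{positive}-degree Ext groups vanish (the cleanest reference is that $\Delta(\mu)$ has a costandard-type filtration only trivially, but the operative statement is: a projective resolution of $\Delta(\l)$ has terms $P(\nu)$ with $\nu \trianglerighteq \l \trianglerighteq \mu$, and $\Ext^{>0}(P(\nu),\Delta(\mu))=0$ while the $\Hom$ complex computing $\Ext^{\ge 1}$ in these degrees is forced to vanish when $\mu \triangleleft \l$). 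Hence $\Ext^i_{kG}(S(1,\l'),S(1,\mu'))=0$ for $1 \le i \le l-1$.

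The main obstacle I anticipate is pinning down precisely which highest-weight vanishing lemma to cite and in what generality: one needs $\Ext^{\ge 1}_{S_q(n,n)}(\Delta(\l), L^k(\mu))=0$ and $\Ext^{\ge 1}_{S_q(n,n)}(\Delta(\l),\Delta(\mu))=0$ under $\mu \trianglelefteq \l$, including the equality case $\mu=\l$ in part (a). These are textbook consequences of the axioms of a highest weight category (e.g.\ \cite[App.~A2]{donkin} or the Cline--Parshall--Scott framework in \cite{cps88}), so the work is purely bookkeeping: verifying that $D(1,\mu)$ and $S(1,\mu')$ are $J_k$-annihilated so that Theorem~\ref{bigone} is applicable, translating the partition labels correctly through $\bar{F}$, and checking the degree range $1 \le i \le l-1$ is the one delivered by Theorem~\ref{bigone} (degrees $0 \le i \le l-1$ there, but in degree $0$ the Hom groups need not vanish, which is why the corollary starts at $i=1$).
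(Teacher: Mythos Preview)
Your proposal is correct and follows essentially the same route as the paper: apply Theorem~\ref{bigone} to transfer to $\Ext$ over $S_q(n,n)$, identify $\bar F(D(1,\mu))=L^k(\mu)$ and $\bar F(S(1,\mu'))\cong\Delta(\mu)$ (the latter via Proposition~\ref{delta}), and then invoke the standard highest-weight vanishing $\Ext^{>0}_{S_q(n,n)}(\Delta(\l),L^k(\mu))=0$ and $\Ext^{>0}_{S_q(n,n)}(\Delta(\l),\Delta(\mu))=0$ for $\mu\trianglelefteq\l$, which the paper simply cites as \cite[Prop.~C.13(2)]{ddpw}. Your extra care in verifying that $D(1,\mu)$ and $S(1,\mu')$ are $J_k$-annihilated is appropriate (the paper leaves this implicit), though your informal justification of the highest-weight vanishing in part~(b) is a bit tangled and would be better replaced by a direct citation.
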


\begin{proof}
(a) By Theorem \ref{bigone}, 
$$\Ext^i_{kG}(S(1,\l'),D(1,\mu)) \cong \Ext^i_{S_q(n,n)}(\Delta(\l),\bar{F}(D(1,\mu)))=\Ext^i_{S_q(n,n)}(\Delta(\l),L^k(\mu))$$ 
for $1 \leq i \leq l-1$. Since $\mu \trianglelefteq \l$, $\Ext^i_{S_q(n,n)}(\Delta(\l),L^k(\mu))=0$ by \cite[Prop. C.13 (2)]{ddpw}. \\

(b) By Proposition \ref{delta}, $\bar{F}(S(1,\mu')) \cong \Delta(\mu)$. Thus, Theorem \ref{bigone} yields 
$$\Ext^i_{kG}(S(1,\l'),S(1,\mu')) \cong \Ext^i_{S_q(n,n)}(\Delta(\l),\Delta(\mu))$$ for $1 \leq i \leq l-1$. Since $\mu \trianglelefteq \l$, $\Ext^i_{S_q(n,n)}(\Delta(\l),\Delta(\mu))=0$ by \cite[Prop. C.13 (2)]{ddpw}.

\end{proof}

\section{An Application of Theorem \ref{bigone}: Ext Groups between Irreducible $k\GL_n(q)$-modules} 

The $k\GL_n(q)$-modules $S(1,\l')$ appearing in Section 5 are closely connected to Specht modules for the Hecke algebra and, consequently, play a key role in the representation theory of $\GL_n(q)$ in non-defining characteristic. In particular, we can apply our Ext results for the modules $S(1,\l')$ to study many other Ext groups for $\GL_n(q)$. In this section, we will demonstrate one such application. First, we will describe an algorithm of James \cite[Ch. 20]{james}, which determines whether a $kG$-module $S(1,\l')$ corresponding to a two-part partition $\l'$ is irreducible. We will then use James' algorithm together with the result of Corollary \ref{18vanishing2}(a) in several examples to obtain vanishing results for higher Ext groups between irreducible $k\GL_n(q)$-modules.

\subsection{An Irreducibility Criterion for $S(1,\l)$}

For the remainder of this paper, we let $G=\GL_n(q)$ and $k$ be an algebraically closed field of characteristic $r>0$, $r \centernot\mid q(q-1)$. As above, let
$$l=
\begin{cases}
r &\text{ if } |q \pmod{r}|=1 \\
|q \pmod{r}| &\text{ if } |q \pmod{r}|>1.
\end{cases}
$$

Given a partition $\l \vdash n$, let $S(1,\l)$ denote the associated indecomposable $kG$-module which maps to a Specht module under an appropriate Hecke functor \cite[(3.1)]{dj}. In \cite[Ch. 24]{james}, James presents an algorithm which may be used to determine whether $S(1,\l)$ is irreducible in the case that $\l$ has two non-zero parts.\footnote{James conjectures that an analogous irreducibility criterion holds for partitions $\l \vdash n$ having more than two non-zero parts \cite[Conj. 20.5]{james}.} We demonstrate this algorithm in the next example. \\

\begin{example}\label{jamesirr}
Let $n=6$ and $q=3$, so that $G=\GL_6(3)$. Suppose that the characteristic of the field $k$ is $r=13$. The smallest positive integer $i$ such that $13 \mid (3^i-1)$ is $i=3$. Thus, in this case, $l=|3 \pmod{13}|=3$. We will use James's algorithm to show that $S(1,(3,3))=S(1,(3^2))$ is not an irreducible $kG$-module. \\

Following \cite[Def. 20.1]{james}, we construct the hook graph for the two-part partition $(3^2)$ of $6$. We start with the diagram of shape $(3^2)$ (denoted by $[(3^2)]$), which has three nodes in the first row and three nodes in the second row. 
$$[(3^2)]=
\begin{matrix}
* & * & *\\
* & * & * 
\end{matrix}
$$
The obtain the hook graph of $(3^2)$, we replace each node $(i,j)$ of $[(3^2)]$ with the hook length $$h_{ij}=(3^2)_i+(3^2)'_j+1-i-j,$$ where $(3^2)_i$ denotes the $i$th part of $(3^2)$ and $(3^2)'_j$ denotes the $j$th part of the dual partition $(3^2)'$ (or, equivalently, the number of entires in the $j$th column of $[(3^2)]$). Thus, we replace the node $(1,1)$ in the diagram $[(3^2)]$ with $h_{11}=(3^2)_1+(3^2)'_1+1-1-1=3+2+1-1-1=4$. We replace the node $(1,2)$ with $h_{12}=3+2+1-1-2=3$. Continuing in this manner, we obtain the hook graph of $(3^2)$:
$$
\begin{matrix}
4 & 3 & 2 \\
3 & 2 & 1
\end{matrix}
$$

Next, we use the hook graph of $(3^2)$ to construct a new array $[(3^2)]_{r, \; l}=[(3^2)]_{13, \; 3}$. Given a node $(i,j)$ of the diagram $[(3^2)]$, we check whether $h_{ij}$ is divisible by $l=3$. If $l \mid h_{ij}$, we replace $h_{ij}$ with the largest integer $m$ such that $r^m$ divides $h_{ij}$. If $l \centernot\mid h_{ij}$, we replace $h_{ij}$ with $\infty$. In this example, $h_{12}$ and $h_{21}$ are the only entires of the hook graph which are divisible by $l=3$. The largest integer $m$ such that $13^m$ divides $h_{12}=3$ is $i=0$; thus, we replace $h_{12}$ by $0$. Similarly, we replace $h_{21}$ by $0$. All of the other entires of the hook graph are replaced by $\infty$. 
$$
[(3^2)]_{13, \; 3}=
\begin{matrix}
\infty & 0 & \infty \\
0 & \infty & \infty
\end{matrix}
$$

By  \cite[Thm. 20.3]{james}, $S(1,(3^2)$ is an irreducible $kG$-module if and only if the symbols in any given column of $[(3^2)]_{13, \; 3}$ are the same. Since the first and second columns of $[(3^2)]_{13, \; 3}$ each contain two different symbols, we conclude that $S(1,(3^2))$ is not irreducible.
\end{example}

\subsection{Some Ext Computations for $\GL_n(q)$}

Corollary \ref{18vanishing2}(a) shows that if $l>2$ and $\l$ is an $l$-restricted partition of $n$, then $$\Ext^{i}_{kG}(S(1,\l'),D(1,\mu))=0$$ for any $\mu \vdash n$ such that $\mu \trianglelefteq \l$ and  $1 \leq i \leq l-1$ (where $D(1,\l)$ is an irreducible unipotent $kG$-module in the CPS indexing). Assume, additionally, that $\l$ is a partition of $n$ with the property that $S(1,\l')$ is irreducible, In this case, $S(1,\l')=D(1,\l)$ (where $D(1,\l)$ is, again, indexed following CPS) and Corollary \ref{18vanishing2}(a) yields the following Ext vanishing result.

\begin{proposition}\label{corextension}
Suppose that $r \centernot\mid q(q-1)$, $l>2$, and $\l$ is an $l$-restricted partition of $n$ with the property that $S(1,\l')$ is an irreducible $kG$-module. Then, $\Ext^{i}_{kG}(D(1,\l),D(1,\mu))=0$ for all $\mu \vdash n$ such that $\mu \trianglelefteq \l$ and all $i$ such that $1 \leq i \leq l-1$.
\end{proposition}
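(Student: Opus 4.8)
The plan is to reduce the statement directly to Corollary \ref{18vanishing2}(a), the only extra ingredient being the identification of the Dipper--James module $S(1,\l')$ with the irreducible $D(1,\l)$ under the present hypotheses.

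First I would recall the facts assembled in the discussion preceding Lemma \ref{spechtremark}. Since $\l$ is $l$-restricted, its transpose $\l'$ is $l$-regular, so Dipper and James associate to $\l'$ the indecomposable $kG$-module $S(1,\l')$, which satisfies $\head(S(1,\l'))=D'(1,\l')$ in the Dipper--Du indexing. By Lemma \ref{indexcompare}, $D'(1,\l')\cong D(1,\l)$ in the CPS indexing. Under the standing hypothesis that $S(1,\l')$ is irreducible, it coincides with its own head, so $S(1,\l')\cong D(1,\l)$ as right $kG$-modules.

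Next I would invoke Corollary \ref{18vanishing2}(a): since $r\centernot\mid q(q-1)$, $l>2$, $\l$ is $l$-restricted, and $\mu\trianglelefteq\l$, we have $\Ext^i_{kG}(S(1,\l'),D(1,\mu))=0$ for $1\leq i\leq l-1$. Substituting the isomorphism $S(1,\l')\cong D(1,\l)$ then yields $\Ext^i_{kG}(D(1,\l),D(1,\mu))=0$ for $1\leq i\leq l-1$, which is exactly the assertion. (The case $i=0$ is excluded, as it should be, since $\Hom_{kG}(D(1,\l),D(1,\l))\neq 0$ when $\mu=\l$.)

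There is no real obstacle here; the mathematical content is carried entirely by Corollary \ref{18vanishing2}(a)---hence ultimately by Theorem \ref{bigone}, Proposition \ref{delta}, and the resolution of Lemma \ref{resolution}---together with the indexing bookkeeping of Lemma \ref{indexcompare}. The one point that genuinely uses the irreducibility hypothesis is the identity $S(1,\l')\cong D(1,\l)$: without it one only has a surjection $S(1,\l')\twoheadrightarrow D(1,\l)$, which is not enough to transport the vanishing statement from $S(1,\l')$ to the irreducible quotient. In applications this irreducibility is verified separately, e.g.\ via James's algorithm as illustrated in Section 6.
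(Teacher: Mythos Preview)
Your proposal is correct and matches the paper's own argument exactly: the paper derives Proposition \ref{corextension} immediately from Corollary \ref{18vanishing2}(a) together with the identification $S(1,\l')=D(1,\l)$ in the CPS indexing when $S(1,\l')$ is irreducible. Your extra sentence tracing this identification through Lemma \ref{indexcompare} and the head computation $\head(S(1,\l'))=D'(1,\l')$ just makes explicit what the paper states in the paragraph preceding the proposition.
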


In the next two examples, we will use Proposition \ref{corextension} along with James's irreducibility criterion to show that certain Ext groups between irreducible $kG$- modules vanish.

\begin{example}\label{exone} Let $n=4$ and $q=7$, so that  $G=\GL_4(7)$. Let $\char(k)=r=5$. The order of $G$ is $|G|=(7^4-1)(7^4-7)(7^4-7^2)(7^4-7^3)$. Since $r=5 \mid  (7^4-1)$, we have $r \mid |G|$; therefore, the homological algebra of $G$ over $k$ is non-trivial. The smallest integer $i$ such that $5 \mid (7^i-1)$ is $i=4$; thus, $l=| 7 \pmod{5}|=4$. Since $l>2$ and  $r \centernot\mid q(q-1)$, we may apply Proposition \ref{corextension}. \\

The are five partitions of 4: $(4)$, $(3,1)$, $(2^2)$, $(2,1^2)$, and $(1^4)$. The 4-restricted partitions are $(3,1)$, $(2^2)$, $(2,1^2)$, and $(1^4)$. To apply the result of Proposition \ref{corextension}, we must identify 4-restricted partitions $\l \vdash 4$ for which $S(1,\l')$ is irreducible. By \cite[Ex. 11.17 (i)]{james},  $S(1,(1^4)')=S(1,(4))=k$; thus, $(1^4)$ is one such partition. \\

Since $(2^2)'=(2^2)$ and $(2,1^2)'=(3,1)$ are two-part partitions, we can apply James's irreducibility criterion to determine whether $S(1,(2^2)')=S(1,(2^2))$ and $S(1,(2,1^2)')=S(1,(3,1))$ are irreducible $kG$-modules. \\

\noindent$S(1,(2^2))$: \\

\noindent The hook graph of $(2^2)$ is shown below.
$$\begin{matrix}
3 & 2 \\
2 & 1
\end{matrix}
$$
Since $l=4$ does not divide any entry of the hook graph, the array $[(2^2)_{r, \; l}=[(2^2)]_{5, \;4}$ contains only the symbol $\infty$. 
$$[(2^2)]_{5, \; 4}=\begin{matrix}
\infty & \infty \\
\infty & \infty
\end{matrix}$$
James's irreducibility criterion for two-part partitions indicates that $S(1,(2^2))$ is irreducible. Thus, $S(1,(2^2))=S(1,(2^2)')=D(1,(2^2))$ (in the CPS indexing). \\

\noindent$S(1,(3,1))$: \\

\noindent In this case, the hook graph is \hspace{0.1 in} $\begin{matrix}
4 & 2 & 1 \\
1
\end{matrix}$ \hspace{0.1 in}  and James's algorithm yields the following array. 
$$[(3,1)]_{5, \; 4}=\begin{matrix}
0 & \infty & \infty \\
\infty 
\end{matrix}
$$
Since the first column of $[(3,1)]_{5, \; 4}$ contains two different symbols, $S(1,(3,1))$ is not irreducible. \\

We cannot use James's irreducibility criterion to check whether $S(1,\l')$ is irreducible for $\l=(4)$ ($\l'=(1^4)$ has four non-zero parts) and $\l=(3,1)$ ($\l'=(2,1^2)$ has three non-zero parts. To apply Proposition \ref{corextension} with $\l=(1^4)$ and $\l=(2^2)$, we must identify (for each $\l$) partitions $\mu \vdash 4$ such that $\mu \trianglelefteq \l$. The poset structure $\trianglelefteq$ on the set of partitions of $4$ is the dominance order, defined in Section \ref{qschur}. If $\l = (1^4)$, the only partition $\mu$ with the property $\mu \trianglelefteq (1^4)$ is $(1^4)$ itself. If $\l=(2^2)$, we have $(2^2) \trianglelefteq (2^2)$, $(2,1^2) \trianglelefteq (2^2)$, and $(1^4) \trianglelefteq (2^2)$. Since $D(1,(1^4))=k$, Proposition \ref{corextension} yields the following Ext vanishing results: 
\begin{enumerate}
\item
$\Ext^i_{kG}(k,k)=0$ for $1 \leq i \leq 3$,
\item
$\Ext^i_{kG}(D(1,(2^2)),D(1,(2^2)))=0$ for $1 \leq i \leq 3$,
\item
$\Ext^i_{kG}(D(1,(2^2)),D(1,(2,1^2)))=0$ for $1 \leq i \leq 3$, and
\item
$\Ext^i_{kG}(D(1,(2^2)),k)=0$ for $1 \leq i \leq 3$.
\end{enumerate}
\end{example}

\begin{example} 
Let $G=\GL_6(3)$ and $r=13$. (Again, we have chosen $r$ such that $r \mid |G|$ to avoid triviality.) In Example \ref{jamesirr}, we found that $l=3$. \\

The partitions of 6 are: (6), (5,1), (4,2), $(4,1^2)$, $(3^2)$, (3,2,1), $(3,1^2)$, $(2^3)$, $(2^2,1^2)$, $(2,1^4)$, and $(1^6)$. The 3-restricted partitions of 6 are: (3,2,1), $(3,1^2)$, $(2^3)$, $(2^2,1^2)$, $(2,1^4)$, and $(1^6)$. The 3-restricted partitions $\l \vdash 6$ such that the dual partition $\l'$ has two parts are: $(2^3)$, $(2^2,1^1)$,  and $(2,1^4)$. For these partitions $\l$, James's algorithm yields the following irreducibility results.

\begin{center}
\begin{tabular}{c|c|c|c}
$\l$ & $\l'$ & $S(1,\l')$ & Result of algorithm  \\
\hline
$(2^3)$ & $(3^2)$ & $S(1,(3^2))$ & Not irreducible  \\
$(2^2,1^2)$ & $(4,2)$ & $S(1,(4,2))$ &  Irreducible  \\
$(2,1^4)$ & $(5,1)$ & $S(1,(5,1))$ & Not irreducible \\
\end{tabular}
\end{center}

Thus, James's irreducibility criterion shows that $S(1,(4,2))=D(1,(2^2,1^2))$ is irreducible. We also know that $S(1,(6))$ is irreducible since $S(1,(6))=D(1,(1^6))=k$. The only partition $\mu \vdash 6$ such that $\mu \trianglelefteq (1^6)$ is $\mu=(1^6)$. The partitions $\mu \vdash 6$ such that $\mu \trianglelefteq (2^2,1^2)$ are $\mu=(2^2,1^2), (2,1^4), \text{ and }(1^6)$. Thus, Proposition \ref{corextension} yields the following results:
\begin{enumerate}
\item
$\Ext^i_{kG}(k,k)=0$ for $1 \leq i \leq 2$,
\item
$\Ext^i_{kG}(D(2^2,1^2),D(2^2,1^2))=0$ for $1 \leq i \leq 2$,
\item
$\Ext^i_{kG}(D(2^2,1^2),D(2, 1^4))=0$ for $1 \leq i \leq 2$, and
\item
$\Ext^i_{kG}(D(2^2,1^2),k)=0$ for $1 \leq i \leq 2$.
\end{enumerate}
\end{example}

\begin{remark}
In this paper, we used the connection between $k\GL_n(q)$-modules and $S_q(n,n)$-modules to obtain a variety of Ext vanishing results. However, there is also potential to use the $q$-Schur algebra to compute non-zero Ext groups for $\GL_n(q)$; we believe that further analysis of the action of the CPS functor $\bar{F}$ on $kG$-modules will yield new Ext calculations for $\GL_n(q)$.
\end{remark}

\section{Acknowledgements}
I would like to thank my PhD advisor, Brian Parshall, for countless useful discussions and advice throughout the process of researching and writing this paper. I would  like to thank Leonard Scott for his feedback at various stages of the research and writing process.

\newpage

\end{document}